\newtheorem{theorem}{Theorem}[section]
\newtheorem{lemma}[theorem]{Lemma}
\newtheorem{proposition}[theorem]{Proposition}
\newtheorem{corollary}[theorem]{Corollary}
\theoremstyle{definition}
\newtheorem{definition}[theorem]{Definition}
\theoremstyle{remark}
\newtheorem{remark}[theorem]{Remark}
\numberwithin{equation}{section}
\def\e{\varepsilon}
\begin{document}

\title[Stability under Hausdorff convergence]{Stability of metric viscosity solutions under Hausdorff convergence}

\author{Shimpei~Makida}
\address{Department of Mathematics, Graduate School of Science, Hokkaido University, North
10, West 8, Kita-Ku, Sapporo, 060-0810 Japan}
% \curraddr{}
\email{makida.shimpei.k3@elms.hokudai.ac.jp}

\author{Atsushi~Nakayasu}
\address{Department of Mathematics, Faculty of Science, Kyoto University, Kitashirakawa Oiwake-cho, Sakyo-ku, Kyoto, 606-8502 Japan}
% \curraddr{}
\email{ankys@math.kyoto-u.ac.jp}

\subjclass[2010]{Primary 35D40; Secondary 35F21, 35B35}
% 35D40 Viscosity solutions to PDEs
% 35F21 Hamilton-Jacobi equations
% 35B35 Stability in context of PDEs

\keywords{Hamilton--Jacobi equations, metric viscosity solutions, stability, Hausdorff convergence}

\date{\today}

%\dedicatory{}

%\commby{}

\begin{abstract}
This study investigated the stability of Hamilton--Jacobi equation on general metric spaces with a perturbation in some whole space. 
This type of stability appears in the domain perturbation problem.
We find that the stability holds when the set converges in the Hausdorff sense and when the metric converges in some uniform sense.
Examples of the perturbed space satisfying these assumptions include network approximation of self-similar sets such as the Sierpi\'{n}ski gasket, junction of shrinking tubes, and lattice lines with the Manhattan distance.
We also give supplemental results on time-dependent or noncompact case.
Stability can be achieved when the class of test function of metric viscosity solutions is reduced to the squared distance functions, whose proof is also given.
\end{abstract}

\maketitle

\section{Introduction}
\label{s:intro}

Consider the Hamilton--Jacobi equation of the generalized form
\begin{equation}
\label{e:fhj}
H(x, u(x), |\nabla_d u|(x)) = 0 \quad \text{for $x \in X$.}
\end{equation}
Here, $(X, d)$ is a complete geodesic space embedded in a whole space $(\mathsf{X}, \mathsf{d})$.
The symbol $|\nabla_d u|$ represents a so-called \emph{local slope} of an unknown function $u: X \to \mathbb{R}$:
\begin{equation}
\label{e:slope}
|\nabla_d u|(x) := \limsup_{y \to x} \frac{|u(y)-u(x)|}{d(x, y)} \quad \text{for $x \in X$.}
\end{equation}
The local slope is well defined as a non-negative real number in $\mathbb{R}_+ := [0, \infty)$ as far as $u$ is locally Lipschitz continuous.
$H$ is a real-valued function on $X\times\mathbb{R}\times\mathbb{R}_+$ called a Hamiltonian.

This study mainly evaluated the \emph{stability} of the Hamilton-Jacobi equation \eqref{e:fhj} with respect to the space $(X, d)$.
That is, we study the sequence of solutions of the Hamilton-Jacobi equation on a variable space $(X_n, d_n)$ in $(\mathsf{X}, \mathsf{d})$ of the form
\begin{equation}
\label{e:hj}
H_n(x, u^n(x), |\nabla_{d_n} u^n|(x)) = 0 \quad \text{for $x \in X_n$}
\end{equation}
with the indexes $n \in \mathbb{N}$
and show that the solution $u^n$ converges to a solution of the limit equation
\begin{equation}
\label{e:lhj}
H_\infty(x, u^\infty(x), |\nabla_{d_\infty} u^\infty|(x)) = 0 \quad \text{for $x \in X_\infty$}
\end{equation}
under a suitable convergence of $X_n$, $d_n$ and $H_n$.

The problem of the stability with respect to domains on which the equation is considered is strongly correlated to the problems in domain perturbation and homogenization, which has been extensively studied.
Due to the large number of results, we only mention those relating to the Hamilton--Jacobi equation.
The stability of viscosity solutions of the Hamilton-Jacobi equations with respect to a sequence of domains appears in the proof of existence of the solutions by domain approximation.
In \cite{AT15}, the solution of the Hamilton-Jacobi equation on the junction is constructed as the graphical limit of the optimal control in the region where the junction is fattened in two dimensions.
In \cite{CCM16}, the viscosity solution of the eikonal equation on the Sierpi\'{n}ski gasket is constructed using the graphical approximation of the Sierpi\'{n}ski gasket.
To generalize the results of \cite{CCM16}, we investigated the stability of viscosity solutions of the Hamilton-Jacobi equations when the increasing sequence of networks in a Euclidean space converges in the Hausdorff sense \cite{M23}.

This study was aimed to extend the results of the previous work \cite{M23} and to investigate for which types of a sequence of metric spaces the stability holds.
In \cite{M23}, the uniform Lipschitz estimate of the family of viscosity solutions is required to extract the limit function at the Hausdorff limit.
In this study, the relaxed half semilimit was used rather than the uniform Lipschitz estimate.  
The relaxed half semilimit is a method used to study large time behavior and homogenization, and the definition itself does not require any regularity property of the solution.
This allows us to consider the stability of viscosity solutions for a sequence of metric spaces that are not necessarily monotonically increasing.

The stability of the heat equation has been studied in the celebrated paper \cite{GMS15}.
In comparison, the Hamilton--Jacobi equation strongly reflects the metric structure of the space, confirming that the heat equation requires the measure structure to the space, while the Hamilton--Jacobi equation requires only the metric structure.
This makes our argument much simpler than the argument in the heat equations.

Similar in \cite{M23}, we adopted the metric viscosity solutions of Gangbo and {\'S}wi{\c{e}}ch \cite{GS14,GS15} as a notion of solutions of the Hamilton-Jacobi equations.
The metric viscosity solutions is the metric space version of the viscosity solutions to deal with fractals and networks, introduced in \cite{GHN15,N14,AF14,GS14,GS15}.
In particular, the metric viscosity solution in \cite{GS14,GS15} is very useful because the comparison theorem holds for a wide class of Hamiltonians.
On equivalence between metric viscosity solutions and other weak solutions of the eikonal equation, we refer the readers to \cite{LSZ21}.

Let us recall the essence of the metric solution solution of Gangbo and {\'S}wi{\c{e}}ch.
The idea is the same as in the classical theory of viscosity solutions.
Consider the situation where the unknown function $u$ is touched by a smooth ($C^{1, \pm}$) function $\phi$ called a test function at a point.
Then, we use the local slope of $\phi$ instead of the one of $u$ in \eqref{e:fhj} at the touching point $\hat{x}$.
The precise definition will be given in Section \ref{s:sol}.
Under such a setting, the unique existence theorem of the solution is shown,
and the stability of standard type holds \cite{NN18}, where standard stability means that the space $(X, d)$ does not perturb.

However, the argument of standard stability cannot be applied to the stability with the space perturbation directly.
The difficulty is that since both the touching point $\hat{x}$ and the test function $\phi$ may be perturbed, we should investigate the convergence of
\begin{equation}
\label{e:convslope}
|\nabla_{d_n}\phi_n|(x_n) \to |\nabla_{d_\infty}\phi|(\hat{x}).
\end{equation}
The critical idea of the present paper to overcome this difficulty is to restrict the class of test functions to the squared distance function
$$
\phi(x) = \frac{k}{2}d(\hat{a}, x)^2
$$
with $k \ge 0$.
Then, condition \eqref{e:convslope} can be reduced to 
$$
d_n(a_n, x_n) \to d_\infty(\hat{a}, \hat{x}).
$$
In order to restrict the class of test functions, Proposition \ref{t:sdtest} is established using a so-called doubling variable method.
A similar technique appears in \cite{N23}.

In view of this argument we show the stability of the metric viscosity solutions under the assumptions
\begin{enumerate}
\renewcommand{\labelenumi}{(H1)}
\renewcommand{\theenumi}{(H1)}
\item
\label{i:h1}
$X_n \to X_\infty$ in $\mathsf{d}$-Hausdorff sense:\
$$
\mathsf{d}_H(X_\infty, X_n) = \max\left\{ \sup_{x \in X_n}\mathsf{d}(X_\infty, x), \sup_{x \in X_\infty}\mathsf{d}(X_n, x) \right\} \to 0,
$$
where $\mathsf{d}(A, x) := \inf_{a \in A}\mathsf{d}(a, x)$ is a distance function from a subset $A$ of $\mathsf{X}$.
\renewcommand{\labelenumi}{(H2)}
\renewcommand{\theenumi}{(H2)}
\item
\label{i:h2}
For arbitrary sequences $a_n \in X_n \to a_\infty \in X_\infty$ and $b_n \in X_n \to b_\infty \in X_\infty$ under the metric $\mathsf{d}$,
the convergence
$$
d_n(a_n, b_n) \to d_\infty(a_\infty, b_\infty)
$$
holds.
\renewcommand{\labelenumi}{(H3)}
\renewcommand{\theenumi}{(H3)}
\item
\label{i:h3}
For arbitrary sequences $x_n \in X_n \to x_\infty \in X_\infty$ under the metric $\mathsf{d}$, $u_n \in \mathbb{R} \to u_\infty$ and $p_n \in \mathbb{R}_+ \to p_\infty$,
the convergence
$$
H_n(x_n, u_n, p_n) \to H_\infty(x_\infty, u_\infty, p_\infty)
$$
holds.
\end{enumerate}
The crucial assumption \ref{i:h2} means that the sequence of the distance functions $d_n$ locally uniformly converges.

\begin{remark}
Conditions \ref{i:h1} and \ref{i:h2} are independent.
For example, consider the arcs
$$
X_n = \{ (\cos\theta, \sin\theta) \mid 0 \le \theta \le 2\pi-n^{-1} \},
\quad X_\infty = \{ (\cos\theta, \sin\theta) \mid 0 \le \theta \le 2\pi \}
$$
embedded in the plane $\mathbb{R}^2$.
Then, condition \ref{i:h2} does not hold.
We remark that the geodesic distance between $(0, 0)$ and $(\cos(2\pi-n^{-1}), \sin(2\pi-n^{-1}))$ over $X_n$ is given by $2\pi-n^{-1}$
while the geodesic distance over $X_\infty$ is $n^{-1}$.
This example satisfies condition \ref{i:h1}.
An example which satisfies \ref{i:h2} and does not fulfill \ref{i:h1} will be given in Remark \ref{t:noth1}.
\end{remark}

Now, which type of sequence of metric spaces satisfies condition \ref{i:h2}?
In this study, we give three main examples.
First is a pre-fractal approximation of self-similar sets.
Generally, since it is difficult to find a solution of the Hamilton--Jacobi equation on fractals, our approach will allow spatial discretization.
However, every fractal and approximation does not satisfy condition \ref{i:h2}.
We show a positive result for the Vicsek fractal and the Sierpi\'{n}ski gasket.
However, the Koch curve does not satisfy condition \ref{i:h2}.

Second is shrinking tubes with a junction as a singularly perturbed problem \cite{AT15}.

Third is the shrinking lattice lines on a Euclidean plane.
The lattice lines converges to the plane in the Hausdorff sense
and the limit metric becomes the so-called Manhattan distance reflecting the intrinsic metric on the lattice lines.

To investigate the stability for time-dependent problems or noncompact metric spaces, our stability theory can be immediately applied to the time-dependent equation.
In the noncompact case, the key tools of the proof of stability is the Ekeland$'$s variational principle \cite{E79}, which corresponds to the maximum principle in general metric spaces.
In this study, we give the stability in the noncompact case on the basis of the discussion in \cite{M23, NN18}.

In section \ref{s:sol}, we recall the definition of the metric viscosity solution and discuss the equivalent condition critical to prove the stability.
In Section \ref{s:stability}, we establish the stability of the metric viscosity solutions in the locally compact case.
In Sections \ref{s:ifs}, \ref{s:junc}, and \ref{s:lattice}, we will give examples satisfying assumption \ref{i:h2} of the stability.
In Section \ref{s:time}, we consider the stability of metric viscosity solutions of time-dependent problems.
In Section \ref{s:noncpt}, we discuss the generalization of the stability to the case when the metric spaces is not necessarily locally compact.

\section{Metric viscosity solutions}
\label{s:sol}

The theory of metric viscosity solutions to the Hamilton--Jacobi equations must be reviewed on a general geodesic space \eqref{e:fhj} proposed by Gangbo and {\'S}wi{\c{e}}ch.
See \cite{GS14, GS15}.
Let $(X, d)$ be a complete geodesic space.

A central notion in this theory is the local slope as defined in \eqref{e:slope}.
The \emph{upper and lower local slopes} are introduced by
$$
|\nabla_d^\pm u|(x) := \limsup_{y \to x} \frac{[u(y)-u(x)]_\pm}{d(x, y)},
$$
where $[\cdot]_\pm$ is the plus or minus part: $[a]_\pm = \max\{ \pm a, 0 \}$.
A locally Lipschitz continuous function $u$ is called $C^{1, \pm}$ if
$|\nabla_d u|$ is continuous and
$$
|\nabla_d^\pm u| = |\nabla_d u|
$$
holds on its domain.

The following lemmas are important.

\begin{lemma}[Maximum principle]
\label{t:mp}
Let $u, v, w$ be locally Lipschitz continuous functions.
\begin{itemize}
\item
If $u-v$ attains a local maximum at $\hat{x}$ and $v$ is $C^{1, -}$, then
$$
|\nabla_d u|(\hat{x}) \ge |\nabla_d v|(\hat{x})
$$
holds.
\item
If $u-v$ attains a local maximum at $\hat{x}$ and $u$ is $C^{1, +}$, then
$$
|\nabla_d u|(\hat{x}) \le |\nabla_d v|(\hat{x})
$$
holds.
\item
If $u-v+w$ attains a local maximum at $\hat{x}$ and $u$ is $C^{1, +}$ and $v$ is $C^{1, -}$, then
$$
||\nabla_d u|(\hat{x})-|\nabla_d v|(\hat{x})| \le |\nabla_d w|(\hat{x})
$$
holds.
\item
If $u-v$ attains a local maximum at $\hat{x}$ and $u$ is $C^{1, +}$ and $v$ is $C^{1, -}$, then
$$
|\nabla_d u|(\hat{x}) = |\nabla_d v|(\hat{x})
$$
holds.
\end{itemize}
\end{lemma}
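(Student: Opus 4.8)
The strategy is to prove the first two bullets directly from the definition of the local slope, and then obtain the remaining two as formal consequences by introducing an auxiliary comparison function. For the first bullet, suppose $u-v$ attains a local maximum at $\hat{x}$, so that $u(y)-u(\hat{x}) \le v(y)-v(\hat{x})$ for all $y$ near $\hat{x}$. Passing to the plus parts, $[u(y)-u(\hat x)]_+ \le [v(y)-v(\hat x)]_+$, whence dividing by $d(\hat x,y)$ and taking $\limsup_{y\to\hat x}$ yields $|\nabla_d^+ u|(\hat x) \le |\nabla_d^+ v|(\hat x)$. Now I would use that $v$ is $C^{1,-}$, so $|\nabla_d^- v|(\hat x) = |\nabla_d v|(\hat x)$, together with the elementary inequality $|\nabla_d v|(\hat x) \le \max\{|\nabla_d^+ v|(\hat x),|\nabla_d^- v|(\hat x)\}$ — actually the cleaner route is: since $v$ is $C^{1,-}$ we have $|\nabla_d v|(\hat x)=|\nabla_d^- v|(\hat x)$, and one always has $|\nabla_d^- v|(\hat x) \le |\nabla_d^+ u|(\hat x) + {}$(something)? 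Let me instead argue directly with $-v$: $u-v$ has a local max at $\hat x$ means $v-u$ has a local min, so $[v(y)-v(\hat x)]_- \ge$ ... The clean bound is $|\nabla_d^- v|(\hat x) \le |\nabla_d^- u|(\hat x) \le |\nabla_d u|(\hat x)$, obtained from $[v(y)-v(\hat x)]_- \le [u(y)-u(\hat x)]_-$ (which follows from $v(y)-v(\hat x)\ge u(y)-u(\hat x)$). Combining with $|\nabla_d^- v|(\hat x)=|\nabla_d v|(\hat x)$ gives $|\nabla_d v|(\hat x)\le|\nabla_d u|(\hat x)$, as desired. The second bullet is entirely symmetric: from the local maximum, $[u(y)-u(\hat x)]_+ \le [v(y)-v(\hat x)]_+$, so $|\nabla_d^+ u|(\hat x)\le|\nabla_d^+ v|(\hat x)\le|\nabla_d v|(\hat x)$, and since $u$ is $C^{1,+}$, $|\nabla_d u|(\hat x)=|\nabla_d^+ u|(\hat x)$.

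For the third bullet, I would reduce to the first two. Since $u-v+w$ has a local maximum at $\hat x$ and $u$ is $C^{1,+}$, apply the second bullet with the pair $(u,\,v-w)$: one gets $|\nabla_d u|(\hat x)\le|\nabla_d(v-w)|(\hat x)$. But $v-w$ need not be $C^{1,-}$, so instead I would work directly with slopes: from $u(y)-u(\hat x)\le v(y)-v(\hat x)-(w(y)-w(\hat x))$ and the $C^{1,+}$ property of $u$, estimate
$$
|\nabla_d u|(\hat x) = |\nabla_d^+ u|(\hat x) \le \limsup_{y\to\hat x}\frac{[v(y)-v(\hat x)]_+ + [w(\hat x)-w(y)]_+}{d(\hat x,y)} \le |\nabla_d v|(\hat x) + |\nabla_d w|(\hat x),
$$
using subadditivity of $\limsup$ and $[a+b]_+\le[a]_++[b]_+$; symmetrically, rewriting the inequality as $v(y)-v(\hat x)\ge u(y)-u(\hat x)+w(y)-w(\hat x)$ and using that $v$ is $C^{1,-}$, $|\nabla_d v|(\hat x)=|\nabla_d^- v|(\hat x)\le|\nabla_d u|(\hat x)+|\nabla_d w|(\hat x)$. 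Together these give $||\nabla_d u|(\hat x)-|\nabla_d v|(\hat x)|\le|\nabla_d w|(\hat x)$.

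The fourth bullet is the case $w\equiv 0$ of the third (or: combine the first two bullets, which give $|\nabla_d v|(\hat x)\le|\nabla_d u|(\hat x)$ and $|\nabla_d u|(\hat x)\le|\nabla_d v|(\hat x)$ simultaneously). I do not anticipate a serious obstacle here; the only point requiring care is the bookkeeping of plus/minus parts and making sure the right one-sided regularity hypothesis ($C^{1,+}$ for the "upper" estimates, $C^{1,-}$ for the "lower" ones) is invoked in each case, together with the elementary facts $\limsup(f+g)\le\limsup f+\limsup g$ and $[a+b]_\pm\le[a]_\pm+[b]_\pm$. Local Lipschitz continuity is used only to guarantee all the slopes involved are finite real numbers so these manipulations are legitimate.
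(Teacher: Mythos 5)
Your proof is correct; the paper omits the proof of this lemma as ``straightforward,'' and your argument --- comparing the signed parts $[\cdot]_\pm$ of the increments $u(y)-u(\hat x)\le v(y)-v(\hat x)-(w(y)-w(\hat x))$ and invoking the one-sided regularity $|\nabla_d^+u|(\hat x)=|\nabla_d u|(\hat x)$ (resp.\ $|\nabla_d^-v|(\hat x)=|\nabla_d v|(\hat x)$) at the touching point, together with $[a+b]_\pm\le[a]_\pm+[b]_\pm$ and subadditivity of $\limsup$ --- is exactly the intended one. The only cosmetic point is that the exploratory detour in the first bullet should be deleted in favor of the final clean chain $|\nabla_d v|(\hat x)=|\nabla_d^- v|(\hat x)\le|\nabla_d^- u|(\hat x)\le|\nabla_d u|(\hat x)$.
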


\begin{lemma}[Squared distance function]
For $a \in X$ the function
$$
\phi(x) = \frac{1}{2}d(a, x)^2
$$
is $C^{1, -}$
and
$$
|\nabla_d^-\phi|(x) = |\nabla_d\phi|(x) = d(a, x)
$$
for all $x \in X$.
\end{lemma}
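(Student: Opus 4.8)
The plan is to verify directly, without invoking the maximum-principle lemma, that $\phi(x)=\frac12 d(a,x)^2$ satisfies the three requirements of a $C^{1,-}$ function (local Lipschitz continuity, continuity of $|\nabla_d\phi|$, and $|\nabla_d^-\phi|=|\nabla_d\phi|$), computing the slope explicitly along the way. First I would record the factorization
\[
\phi(x)-\phi(y)=\tfrac12\bigl(d(a,x)-d(a,y)\bigr)\bigl(d(a,x)+d(a,y)\bigr),
\]
so that, using the $1$-Lipschitz bound $|d(a,x)-d(a,y)|\le d(x,y)$, one gets $|\phi(x)-\phi(y)|\le\tfrac12 d(x,y)\bigl(d(a,x)+d(a,y)\bigr)$; since the second factor is bounded on bounded sets, $\phi$ is locally Lipschitz.

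Next I would extract the slope. Dividing the last inequality by $d(x,y)$ and letting $y\to x$ gives $|\nabla_d\phi|(x)\le d(a,x)$, and hence also $|\nabla_d^-\phi|(x)\le d(a,x)$ because $[\,\cdot\,]_-\le|\cdot|$. For the reverse inequality, which is where the geodesic hypothesis on $(X,d)$ is actually used, I would, in the case $x\ne a$, take a unit-speed geodesic $\gamma$ from $x$ to $a$ and test with $y=\gamma(t)$ for small $t>0$; then $d(x,y)=t$ and $d(a,y)=d(a,x)-t$, so $\phi(x)-\phi(y)=\tfrac12 t\bigl(2d(a,x)-t\bigr)>0$ and
\[
\frac{[\phi(y)-\phi(x)]_-}{d(x,y)}=\frac{\phi(x)-\phi(y)}{t}=d(a,x)-\tfrac{t}{2}\longrightarrow d(a,x)
\]
as $t\downarrow 0$, forcing $|\nabla_d^-\phi|(x)\ge d(a,x)$. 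The degenerate case $x=a$ is immediate since then $[\phi(y)-\phi(x)]_-\equiv 0$ and $d(a,x)=0$. Chaining the estimates yields $d(a,x)\le|\nabla_d^-\phi|(x)\le|\nabla_d\phi|(x)\le d(a,x)$, so all three coincide with $d(a,x)$.

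Finally, continuity of $|\nabla_d\phi|=d(a,\cdot)$ is clear since $d(a,\cdot)$ is $1$-Lipschitz, and combined with the previous two steps this shows $\phi\in C^{1,-}$. I do not expect a genuine obstacle here; the only subtle point is that the lower slope bound depends essentially on the existence of a geodesic issuing from $x$ toward $a$ — in a merely metric, non-geodesic space one would only obtain $|\nabla_d\phi|(x)\le d(a,x)$ — and that the point $x=a$ must be treated separately.
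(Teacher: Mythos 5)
Your proof is correct and complete: the upper bound via the $1$-Lipschitz property of $d(a,\cdot)$, the lower bound along a unit-speed geodesic from $x$ to $a$, and the separate treatment of $x=a$ together verify all three requirements of the $C^{1,-}$ definition. The paper omits this proof as straightforward, and your argument is precisely the standard direct verification the authors intend.
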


Since the proofs of these lemmas are straightforward,
we omit them.

A locally Lipschitz function $\psi(x) = \psi_1(x)+\psi_2(x)$ is a \emph{test function for subsolutions}
if $\psi_1 \in C^{1, -}$ and $\psi_2$ is a locally Lipschitz function and \emph{for supersolutions}
if $\psi_1 \in C^{1, +}$ and $\psi_2$ is a locally Lipschitz function.
For a function $u$, we denote its upper and lower semicontinuous envelopes by $u^*$ and $u_*$, respectively.
Let $|\nabla_d u|^*$ be the upper semicontinuous envelope of $|\nabla_d u|$.
% $$
% |\nabla_d u|^*(x) := \limsup_{y \to x} |\nabla_d u|(y).
% $$

\begin{definition}[Metric viscosity solutions]
Let $u: X \to \mathbb{R}$.
\begin{itemize}
\item
$u$ is a \emph{metric viscosity subsolution} of \eqref{e:fhj}
if for every test function $\psi = \psi_1+\psi_2$ for subsolutions such that $u^*-\psi$ attains a local maximum at $\hat{x} \in X$
we have
$$
H_{|\nabla_d\psi_2|^*(\hat{x})}(\hat{x}, u^*(\hat{x}), |\nabla_d\psi_1|(\hat{x})) \le 0,
$$
where $H_a(x, u, p) = \inf_{|p'-p| \le a}H(x, u, p')$.
\item
$u$ is a \emph{metric viscosity subsolution} of \eqref{e:fhj}
if for every test function $\psi = \psi_1+\psi_2$ for supersolutions such that $u_*-\psi$ attains a local minimum at $\hat{x} \in X$
we have
$$
H^{|\nabla_d\psi_2|^*(\hat{x})}(\hat{x}, u_*(\hat{x}), |\nabla_d\psi_1|(\hat{x})) \ge 0,
$$
where $H^a(x, u, p) = \sup_{|p'-p| \le a}H(x, u, p')$.
\item
$u$ is a \emph{metric viscosity solution} of \eqref{e:fhj}
if $u$ is both a metric viscosity subsolution and supersolution.
\end{itemize}
\end{definition}

Under these settings the following theorems are established in \cite{GS15}.

\begin{theorem}[Comparison principle]
Assume (A1), (A2), (A3), and (A4) with $\nu >0$ in \cite{GS15}.
Let $u$ be a metric viscosity subsolution of \eqref{e:fhj} and $v$ be a metric viscosity supersolution of \eqref{e:fhj}.
Let $u, -v$ be bounded from above.
Then, $u \le v$ in $X$ holds. 
\end{theorem}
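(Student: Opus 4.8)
The plan is to run the usual contradiction argument by doubling of variables, with the squared distance function of the preceding lemma serving as the quadratic penalization that the metric framework demands, and with Ekeland's variational principle standing in for the missing compactness of $X$. I would suppose $m := \sup_X(u^* - v_*) > 0$ and, for $\alpha > 0$, introduce
$$
\Phi_\alpha(x, y) := u^*(x) - v_*(y) - \tfrac{\alpha}{2}d(x, y)^2 ,
$$
which is upper semicontinuous and bounded from above because $u$ and $-v$ are. Applying Ekeland's principle to $-\Phi_\alpha$ on the complete space $X \times X$ with the sum metric produces, for a small parameter $\delta = \delta(\alpha) \to 0$, a point $(x_\alpha, y_\alpha)$ at which $(x, y) \mapsto \Phi_\alpha(x, y) - \delta\bigl(d(x, x_\alpha) + d(y, y_\alpha)\bigr)$ attains a strict maximum, while still $\Phi_\alpha(x_\alpha, y_\alpha) \ge m - o(1)$.

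Next I would record the standard penalization estimates: comparing the value at $(x_\alpha, y_\alpha)$ with the value at $(x_\alpha, x_\alpha)$ and using the bounds on $u^*$ and $-v_*$ forces $\alpha\, d(x_\alpha, y_\alpha)^2 \to 0$ (hence $d(x_\alpha, y_\alpha) \to 0$) and $u^*(x_\alpha) - v_*(y_\alpha) \to m$. Then I freeze variables. Holding $y = y_\alpha$ fixed, $x \mapsto u^*(x) - \bigl[\tfrac{\alpha}{2}d(y_\alpha, x)^2 + \delta\, d(x_\alpha, x)\bigr]$ has a local maximum at $x_\alpha$; by the squared-distance lemma $\tfrac{\alpha}{2}d(y_\alpha, \cdot)^2 \in C^{1,-}$ with local slope $\alpha\, d(y_\alpha, x_\alpha)$ there, and $\delta\, d(x_\alpha, \cdot)$ is Lipschitz with local slope $\le \delta$ at $x_\alpha$, so this is an admissible test function for subsolutions, and the subsolution condition gives $H_{\delta}\bigl(x_\alpha, u^*(x_\alpha), \alpha\, d(x_\alpha, y_\alpha)\bigr) \le 0$. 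Symmetrically, holding $x = x_\alpha$ fixed, $y \mapsto v_*(y) - \bigl[-\tfrac{\alpha}{2}d(x_\alpha, y)^2 - \delta\, d(y_\alpha, y)\bigr]$ has a local minimum at $y_\alpha$, and $-\tfrac{\alpha}{2}d(x_\alpha, \cdot)^2 \in C^{1,+}$ with local slope $\alpha\, d(x_\alpha, y_\alpha)$ at $y_\alpha$, so the supersolution condition gives $H^{\delta}\bigl(y_\alpha, v_*(y_\alpha), \alpha\, d(x_\alpha, y_\alpha)\bigr) \ge 0$.

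Finally I would unwind the $\inf/\sup$ relaxations, let $\delta \to 0$ using the continuity of $H$, and work with the common slope $p_\alpha := \alpha\, d(x_\alpha, y_\alpha)$, so that $H(x_\alpha, u^*(x_\alpha), p_\alpha) \le o(1)$ and $H(y_\alpha, v_*(y_\alpha), p_\alpha) \ge -o(1)$. The structure conditions of \cite{GS15} — the strict monotonicity in the $u$-variable with rate $\nu > 0$, together with the modulus-of-continuity estimate in the $(x, p)$ variables — then yield
\begin{align*}
\nu\bigl(u^*(x_\alpha) - v_*(y_\alpha)\bigr)
&\le H\bigl(x_\alpha, u^*(x_\alpha), p_\alpha\bigr) - H\bigl(x_\alpha, v_*(y_\alpha), p_\alpha\bigr) \\
&\le \bigl( H\bigl(y_\alpha, v_*(y_\alpha), p_\alpha\bigr) - H\bigl(x_\alpha, v_*(y_\alpha), p_\alpha\bigr) \bigr) + o(1) \\
&\le \omega\bigl( d(x_\alpha, y_\alpha)\,(1 + p_\alpha) \bigr) + o(1).
\end{align*}
Since $d(x_\alpha, y_\alpha)\,(1 + p_\alpha) = d(x_\alpha, y_\alpha) + \alpha\, d(x_\alpha, y_\alpha)^2 \to 0$, the right-hand side tends to $0$; but the left-hand side tends to $\nu m > 0$, the desired contradiction, which forces $u^* \le v_*$ and in particular $u \le v$ on $X$.

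The viscosity-solution steps above are essentially dictated by the squared-distance lemma and the maximum principle, so I expect the main obstacle to be the non-compact bookkeeping around Ekeland's principle: one must choose the perturbation so that simultaneously an approximate maximizer of $\Phi_\alpha$ exists, the perturbation scale $\delta$ vanishes fast enough not to spoil the penalization estimates, and the perturbation remains a genuine locally Lipschitz $\psi_2$-component, so that the relaxed Hamiltonians $H_\delta$, $H^\delta$ really do collapse onto $H$ as $\alpha \to \infty$. Reconciling all three of these with the precise form of (A1)--(A4) is the technical heart of the argument carried out in \cite{GS15}.
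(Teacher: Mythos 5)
Your argument is sound and follows essentially the standard Gangbo--{\'S}wi{\c{e}}ch comparison proof: doubling of variables with the squared-distance penalization as $\psi_1$, Ekeland's variational principle supplying an attained (perturbed) maximum in the absence of compactness with the perturbation absorbed into the locally Lipschitz $\psi_2$-part, and the structure conditions (A3)--(A4) closing the contradiction; the paper itself does not reprove this theorem but simply quotes it from \cite{GS15}, whose proof proceeds along the same route you outline. The technical points you flag at the end --- choosing the approximate maximizer so that the Ekeland rate $\delta$ vanishes, and the standard monotonicity-in-$\alpha$ lemma giving $\alpha\, d(x_\alpha, y_\alpha)^2 \to 0$ (which is what you actually need, rather than the comparison with $(x_\alpha, x_\alpha)$ alone) --- are indeed the only details to be filled in, and they are handled exactly as you anticipate in \cite{GS15}.
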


\begin{theorem}[Unique existence]
Assume $H$ is continuous and there exists $\lambda > 0$ such that $H(x, u, p)-\lambda u$ is non-decreasing.
We also assure
$$
\sup_{x \in X} H(x,0,0) < \infty.
$$
Then, a metric viscosity solution of \eqref{e:fhj} exists.
Furthermore, if we additionally assume (A1), (A3), and (A4) of \cite{GS15}, the metric viscosity solution is unique.
\end{theorem}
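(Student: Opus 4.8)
The plan is to obtain existence by Perron's method and to deduce uniqueness directly from the Comparison Principle. First I would produce constant barriers. A constant $w\equiv C$ is $C^{1,\pm}$ with $|\nabla_d w|\equiv 0$, and a short computation using the Maximum Principle (Lemma~\ref{t:mp}) at the touching point shows that $w\equiv C$ is a metric viscosity subsolution of \eqref{e:fhj} exactly when $H(x,C,0)\le 0$ for all $x\in X$, and a supersolution exactly when $H(x,C,0)\ge 0$ for all $x$. The monotonicity hypothesis gives $H(x,C,0)\le H(x,0,0)+\lambda C$ for $C\le 0$, so with $M:=\sup_{x}H(x,0,0)<\infty$ the constant $\underline u\equiv C_-$ is a subsolution as soon as $C_-<-M/\lambda$; a constant supersolution $\overline u\equiv C_+$ with $C_+$ large is obtained symmetrically (using here also that $H(\cdot,0,0)$ is bounded below, which is automatic when $X$ is compact). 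Enlarging the constants, we may assume $\underline u\le\overline u$.

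Next, let $u(x)$ be the supremum of $w(x)$ over all metric viscosity subsolutions $w$ of \eqref{e:fhj} with $\underline u\le w\le\overline u$; this family contains $\underline u$, and $C_-\le u\le C_+$, so $u$ is bounded. The core of Perron's method is to show that $u^*$ is a subsolution and $u_*$ is a supersolution. For the first, one uses that the maximum of two subsolutions is again a subsolution (immediate, since the upper semicontinuous envelope commutes with a finite maximum) together with a routine approximation: if $u^*-\psi$ has a strict local maximum at $\hat x$, then subsolutions $w$ in the family with $w(\hat x)$ close to $u^*(\hat x)$ have near-maxima of $w-\psi$ at points $x_w\to\hat x$ where the subsolution inequality holds; letting $w\to u$ and invoking the continuity of $H$ yields the inequality for $u^*$, and since $u^*$ is then in the family one gets $u=u^*$. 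For the second I argue by contradiction: if $u_*-\psi$ attains a local minimum at $\hat x$ but $H^{|\nabla_d\psi_2|^*(\hat x)}(\hat x,u_*(\hat x),|\nabla_d\psi_1|(\hat x))<0$, then by continuity of $H$ the perturbation $\tilde\psi(x)=\psi(x)+\delta-\tfrac{k}{2}d(\hat x,x)^2$ — which, by the lemma on squared distance functions and Lemma~\ref{t:mp}, is again an admissible test function, now with a strictly smaller, controlled local slope — is a strict subsolution on a small ball $B$ that lies strictly above $u$ at a point of $B$ while staying below $u$ near $\partial B$; replacing $u$ on $B$ by $\max\{u,\tilde\psi\}$ produces a subsolution squeezed between $\underline u$ and $\overline u$ that strictly exceeds $u$ somewhere, contradicting the definition of $u$.

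Since $\underline u\le u\le\overline u$, both $u^*$ and $-u_*$ are bounded above, so the Comparison Principle gives $u^*\le u_*$; combined with $u_*\le u\le u^*$ this forces $u=u^*=u_*$, so $u$ is continuous and is the desired metric viscosity solution. For uniqueness, note that the monotonicity hypothesis is precisely assumption (A2) of \cite{GS15} with $\nu=\lambda>0$, so under the additional hypotheses (A1), (A3), (A4) the Comparison Principle applies; if $u,v$ are bounded metric viscosity solutions then $u\le v$ and $v\le u$, whence $u=v$.

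The step I expect to be the main obstacle is the localized perturbation inside Perron's argument. On a bare geodesic space the $C^{1,\pm}$ building blocks one is handed are essentially constants and squared distance functions, so manufacturing a genuine strict local extremum and keeping control of the local slope of a sum of such pieces — exactly where Lemma~\ref{t:mp} and the lemma on squared distance functions are used — takes the place of the routine bump-function constructions of the Euclidean theory and must be carried out by hand; one must also track carefully that the $H_a$ and $H^a$ corrections coming from the locally Lipschitz part $\psi_2$ of each test function are respected at every step.
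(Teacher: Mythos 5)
The paper itself does not prove this theorem; it is quoted from \cite{GS15}, and your Perron-plus-comparison strategy is exactly the argument of that reference (see also \cite{NN18}), so your route is the intended one and the sketch is essentially sound. Two points need repair. First, in the existence half you invoke the Comparison Principle to obtain $u^*\le u_*$ and hence continuity; but comparison requires (A1)--(A4), which are assumed only in the uniqueness half, so that step is not available under the existence hypotheses. Fortunately it is also unnecessary: in the paper's definition the subsolution property is a condition on $u^*$ and the supersolution property a condition on $u_*$, so once Perron's method shows that $u^*$ satisfies the subsolution inequalities and $u_*$ the supersolution inequalities, the (possibly discontinuous) function $u$ is already a metric viscosity solution; the comparison step should simply be deleted from the existence argument. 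Second, your constant supersolution needs $H(x,C_+,0)\ge H(x,0,0)+\lambda C_+\ge 0$, hence $\inf_x H(x,0,0)>-\infty$, whereas the theorem states only $\sup_x H(x,0,0)<\infty$ and does not assume $X$ compact; you have correctly detected that the hypothesis should read $\sup_x|H(x,0,0)|<\infty$, as in \cite{GS15}. The step you single out as the real obstacle --- constructing the local bump $\max\{u,\tilde\psi\}$ from squared-distance test functions while keeping track of the $H_a$, $H^a$ corrections via Lemma~\ref{t:mp} --- is indeed where the metric-space work lies and is the content of the Perron argument in \cite{GS15}.
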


Here is a key proposition to show the stability, suggesting that $\psi_2$ can be dropped and $\psi_1$ reduced to the squared distance function in the test functions.

\begin{proposition}
\label{t:sdtest}
Let $(X, d)$ be a locally compact geodesic space.
Then, the following conditions are equivalent.
\begin{itemize}
\item[(i)]
A function $u: X \to \mathbb{R}$ is a metric viscosity subsolution of \eqref{e:fhj}.
\item[(ii)]
For all $\hat{a}, \hat{x} \in X$, $k \ge 0$,
if $u^*(x)-\frac{k}{2}d(\hat{a}, x)^2$ attains a local maximum at $x = \hat{x}$,
then the inequality
$$
H(\hat{x}, u^*(\hat{x}), k d(\hat{a}, \hat{x})) \le 0
$$
holds.
\item[(iii)]
For all $\hat{a}, \hat{x} \in X$, $k \ge 0$,
if $u^*(x)-\frac{k}{2}d(\hat{a}, x)^2$ attains a strict local maximum at $x = \hat{x}$,
then the inequality
$$
H(\hat{x}, u^*(\hat{x}), k d(\hat{a}, \hat{x})) \le 0
$$
holds.
\end{itemize}
Similarly, the following conditions are equivalent.
\begin{itemize}
\item[(i)]
A function $u: X \to \mathbb{R}$ is a metric viscosity supersolution of \eqref{e:fhj}.
\item[(ii)]
For all $\hat{a}, \hat{x} \in X$, $k \ge 0$,
if $u_*(x)+\frac{k}{2}d(\hat{a}, x)^2$ attains a local minimum at $x = \hat{x}$,
then the inequality
$$
H(\hat{x}, u_*(\hat{x}), k d(\hat{a}, \hat{x})) \ge 0
$$
holds.
\item[(iii)]
For all $\hat{a}, \hat{x} \in X$, $k \ge 0$,
if $u_*(x)+\frac{k}{2}d(\hat{a}, x)^2$ attains a strict local minimum at $x = \hat{x}$,
then the inequality
$$
H(\hat{x}, u_*(\hat{x}), k d(\hat{a}, \hat{x})) \ge 0
$$
holds.
\end{itemize}
\end{proposition}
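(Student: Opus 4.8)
The directions (i)$\Rightarrow$(ii) and (ii)$\Rightarrow$(iii) are immediate and I would dispose of them first: for the first, $\tfrac{k}{2}d(\hat a,\cdot)^2=k\cdot(\tfrac12 d(\hat a,\cdot)^2)$ is $C^{1,-}$ by the squared‑distance lemma (scaling by $k\ge0$ preserves $C^{1,-}$ and multiplies the slope by $k$), hence it is an admissible $\psi_1$ with $\psi_2\equiv0$, and the subsolution inequality becomes $H_0(\hat x,u^*(\hat x),kd(\hat a,\hat x))=H(\hat x,u^*(\hat x),kd(\hat a,\hat x))\le0$; the second is trivial since a strict local maximum is a local maximum. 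The supersolution half is entirely symmetric. So the content of the proposition is the implication (iii)$\Rightarrow$(i), which I would prove by the doubling‑of‑variables method.

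Fix a test function $\psi=\psi_1+\psi_2$ for subsolutions with $u^*-\psi$ attaining a local maximum at $\hat x$. Replacing $\psi_2$ by $\psi_2+\delta\,d(\hat x,\cdot)^2$ (slope‑neutral at $\hat x$, and harmless because $|\nabla_d(\psi_2+\delta d(\hat x,\cdot)^2)|^*(\hat x)=|\nabla_d\psi_2|^*(\hat x)$ by subadditivity of slopes) I may assume this maximum is strict, and since $(X,d)$ is locally compact, complete and geodesic it is proper by the Hopf--Rinow theorem, so I may work on a fixed compact ball $K=\bar B(\hat x,r)$ on which $u^*-\psi\le(u^*-\psi)(\hat x)$. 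For $\alpha>0$ I consider on $K\times K$
$$
\Phi_\alpha(x,y)=u^*(x)-\psi_1(y)-\psi_2(y)-\tfrac{\alpha}{2}d(x,y)^2 ,
$$
the point being that $\psi_2$ is placed in the \emph{same} slot as $\psi_1$, so that the $x$‑slot stays a pure squared distance. As $\Phi_\alpha$ is upper semicontinuous it attains its maximum at some $(x_\alpha,y_\alpha)$, which after a small perturbation I arrange to be the unique maximizer. The standard doubling estimates (compare with $(\hat x,\hat x)$, use upper semicontinuity of $u^*$, continuity of $\psi_1,\psi_2$, and strictness of the maximum of $\Phi_\alpha(x,x)=u^*(x)-\psi(x)$) give $(x_\alpha,y_\alpha)\to(\hat x,\hat x)$, $\alpha\,d(x_\alpha,y_\alpha)^2\to0$, $u^*(x_\alpha)\to u^*(\hat x)$, and eventual interiority in $K\times K$.

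Now the two partial maximizations. On the $x$‑slot, $x\mapsto u^*(x)-\tfrac{\alpha}{2}d(y_\alpha,x)^2$ has a maximum at $x_\alpha$, which is strict by uniqueness of the maximizer of $\Phi_\alpha$; since this is a squared‑distance function, condition (iii) applies and yields $H(x_\alpha,u^*(x_\alpha),p_\alpha)\le0$ with $p_\alpha:=\alpha\,d(x_\alpha,y_\alpha)$. On the $y$‑slot, $y\mapsto-\psi_1(y)-\psi_2(y)-\tfrac{\alpha}{2}d(x_\alpha,y)^2$ has a maximum at $y_\alpha$; writing this as $(-\psi_1)-(\tfrac{\alpha}{2}d(x_\alpha,\cdot)^2)+(-\psi_2)$ with $-\psi_1\in C^{1,+}$ (equivalently $\psi_1\in C^{1,-}$), $\tfrac{\alpha}{2}d(x_\alpha,\cdot)^2\in C^{1,-}$ and $-\psi_2$ locally Lipschitz, the third bullet of Lemma~\ref{t:mp} gives
$$
\bigl|\,|\nabla_d\psi_1|(y_\alpha)-p_\alpha\,\bigr|\le|\nabla_d\psi_2|(y_\alpha).
$$
Since $|\nabla_d\psi_1|$ is continuous we have $|\nabla_d\psi_1|(y_\alpha)\to|\nabla_d\psi_1|(\hat x)$, and since $|\nabla_d\psi_2|\le|\nabla_d\psi_2|^*$ with $|\nabla_d\psi_2|^*$ upper semicontinuous, $\limsup_\alpha|\nabla_d\psi_2|(y_\alpha)\le|\nabla_d\psi_2|^*(\hat x)$; hence $(p_\alpha)$ is bounded and every limit point $\bar p$ satisfies $|\bar p-|\nabla_d\psi_1|(\hat x)|\le|\nabla_d\psi_2|^*(\hat x)$. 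Passing to a subsequence with $p_\alpha\to\bar p$ and using continuity of $H$ together with $x_\alpha\to\hat x$, $u^*(x_\alpha)\to u^*(\hat x)$ gives $H(\hat x,u^*(\hat x),\bar p)\le0$, whence
$$
H_{|\nabla_d\psi_2|^*(\hat x)}\bigl(\hat x,u^*(\hat x),|\nabla_d\psi_1|(\hat x)\bigr)=\inf_{|q-|\nabla_d\psi_1|(\hat x)|\le|\nabla_d\psi_2|^*(\hat x)}H(\hat x,u^*(\hat x),q)\le0 ,
$$
which is the required subsolution inequality. The supersolution case runs the same way, using the appropriate bullet of Lemma~\ref{t:mp} and lower semicontinuous envelopes.

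The step I expect to carry the weight is the slope estimate on the $y$‑slot via Lemma~\ref{t:mp}: it is exactly what converts the locally Lipschitz perturbation $\psi_2$ into the admissible window $|q-|\nabla_d\psi_1|(\hat x)|\le|\nabla_d\psi_2|^*(\hat x)$ of the relaxed Hamiltonian $H_a$, and it hinges on two points — keeping $\psi_2$ in the far slot so the $x$‑slot remains a bare squared distance, and the $C^{1,\pm}$ duality $\psi_1\in C^{1,-}\Leftrightarrow-\psi_1\in C^{1,+}$ so that the maximum principle is applicable there. The subsidiary technical nuisance is the reduction to a \emph{strict} maximum required to invoke (iii): I would obtain it by forcing $\Phi_\alpha$ to have a unique maximizer through a small perturbation of the doubling functional, taking care that the perturbation is either slope‑neutral at $\hat x$ or confined to the $y$‑slot, so as not to destroy the "pure squared distance'' structure on the $x$‑slot.
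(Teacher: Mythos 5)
Your reductions of (i)$\Rightarrow$(ii) and (ii)$\Rightarrow$(iii), and the doubling machinery itself (keeping $\psi_2$ in the $y$-slot so the $x$-slot stays a bare squared distance, the third bullet of Lemma~\ref{t:mp} giving $\bigl||\nabla_d\psi_1|(y_\alpha)-p_\alpha\bigr|\le|\nabla_d\psi_2|(y_\alpha)$, and the limit passage into $H_{|\nabla_d\psi_2|^*(\hat x)}$) all coincide with the paper's argument. The structural difference is that the paper splits the remaining work into two implications --- (iii)$\Rightarrow$(ii) by a one-line perturbation replacing $k$ by $k+\eta$, and (ii)$\Rightarrow$(i) by doubling --- whereas you attempt (iii)$\Rightarrow$(i) in one shot. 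That is where there is a genuine gap: to invoke (iii) on the $x$-slot you need $x\mapsto u^*(x)-\tfrac{\alpha}{2}d(y_\alpha,x)^2$ to have a \emph{strict} maximum at $x_\alpha$, and you propose to obtain it by perturbing $\Phi_\alpha$ into having a unique maximizer, with the perturbation ``confined to the $y$-slot or slope-neutral.'' A perturbation depending only on $y$ cannot do this: if $\Phi_\alpha(x_1,y_0)=\Phi_\alpha(x_2,y_0)=\max\Phi_\alpha$ with $x_1\ne x_2$, adding $g(y)$ shifts both values by the same amount $g(y_0)$ and the tie survives (such ties do occur, e.g.\ when $u^*$ locally equals $\tfrac{\alpha}{2}d(y_0,\cdot)^2$ plus a constant). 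Any perturbation that does break the tie must act in the $x$-variable, say $-\delta\,d(x_0,x)^2$, and then the $x$-section is a sum of two squared distances with different centers, so (iii) no longer applies. You cannot satisfy both constraints simultaneously, and your proposal does not say how.

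The cheap repair is the paper's route: establish (iii)$\Rightarrow$(ii) as a separate preliminary step and then run your doubling argument from (ii), in which case the $x$-slot only needs a plain (non-strict) local maximum, which the maximizer of $\Phi_\alpha$ provides for free. Be aware, though, that (iii)$\Rightarrow$(ii) is subject to exactly the same tension: the paper's device of passing from $k$ to $k+\eta$ produces a strict maximum at $\hat x$ only when the added term $-\tfrac{\eta}{2}d(\hat a,x)^2$ actually decreases away from $\hat x$, which is automatic for $\hat a=\hat x$ but fails in general for $\hat a\ne\hat x$ (points on the geodesic toward $\hat a$ make that term larger). So if you adopt this architecture, the one step that still needs an honest argument is how to strictify a non-strict maximum while remaining inside the class of single squared distance functions.
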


\begin{proof}
We only show the assertion for subsolutions.

To show that condition (ii) implies condition (i), test function $\psi = \psi_1+\psi_2$ for subsolutions should be fixed such that $u^*-\psi$ attains a local maximum at $\hat{x} \in X$.
Consider
$$
(x, y) \mapsto u^*(x)-\psi(y)-\frac{1}{2\e}d(x, y)^2-\frac{\alpha}{2}d(\hat{x}, y)^2
$$
with positive $\e$, $\alpha$.
Since $X$ is locally compact we can take a local maximum point $(x_\e, y_\e)$.
Then,
$$
u^*(x_\e)-\psi(y_\e)-\frac{1}{2\e}d(x_\e, y_\e)^2-\frac{\alpha}{2}d(\hat{x}, y_\e)^2
\ge u^*(\hat{x})-\psi(\hat{x})
\ge u^*(x_\e)-\psi(x_\e)
$$
and hence
$$
\frac{1}{2\e}d(x_\e, y_\e)^2+\frac{\alpha}{2}d(\hat{x}, y_\e)^2 \le L d(x_\e, y_\e)
$$
for the (local) Lipschitz constant $L$ of $\psi$.
Therefore, $(x_\e, y_\e) \to (\hat{x}, \hat{x})$ as $\e \to 0$.
Now, by the assumption of $u$ we have
$$
H(x_\e, u^*(x_\e), \frac{1}{\e}d(x_\e, y_\e)) \le 0.
$$
By Lemma \ref{t:mp} we obtain
$$
||\nabla_d\psi_1|(y_\e)-\frac{1}{\e}d(x_\e, y_\e)| \le |\nabla_d\psi_2|(t_\e, y_\e)+\alpha d(\hat{x}, y_\e).
$$
Combining them,
$$
H_{|\nabla_d\psi_2|(y_\e)+\alpha d(\hat{x}, y_\e)}(x_\e, u^*(x_\e), |\nabla_d\psi_1|(y_\e)) \le 0
$$
and therefore
$$
H_{|\nabla_d\psi_2|^*(\hat{x})}(\hat{x}, u^*(\hat{t}), |\nabla_d\psi_1|(\hat{x})) \le 0.
$$
Thus, $u$ is a metric viscosity subsolution.

Condition (iii) implies condition (ii).
$\hat{a}, \hat{x} \in X$ and $k \ge 0$ should be fixed such that $u^*(x)-\frac{k}{2}d(\hat{a}, x)^2$ attains a local maximum at $x = \hat{x}$.
Hence, for $\eta > 0$ the function $u^*(x)-\frac{k+\eta}{2}d(\hat{a}, x)^2$ attains a strict local maximum at $x = \hat{x}$ and hence
$$
H(\hat{x}, u^*(\hat{x}), (k+\eta)d(\hat{a}, \hat{x})) \le 0.
$$
Sending $\eta \to 0$, we have the desired inequality.

The proof of the other parts of this proposition are trivial.
\end{proof}

\begin{remark}
Proposition \ref{t:sdtest} reveals that metric viscosity solutions and classical viscosity solutions of the Hamilton-Jacobi equation are equivalent when the space is a Euclidean space $\mathbb{R}^d$ with the Euclidean norm $|\cdot|$ and the Euclidean distance $d^E(a, b) = |b-a|$.
Let us illustrate the equivalence with the example of the stationary Hamilton-Jacobi equation of the form
\begin{align}\label{e:shjinE}
    H(x,u(x),|\nabla u(x)|)=0 \quad \text{for $x \in \mathbb{R}^d $} 
\end{align}
where the Hamiltonian $H=H(x,u,p): \mathbb{R}^{d} \times \mathbb{R} \times \mathbb{R}_{+} \to \mathbb{R}$ is continuous.
Function $u$ is a viscosity subsolution of \eqref{e:shjinE} if $u^*-\phi$ with $\phi \in C^{1}(\mathbb{R}^d)$ have a local maximum at $x \in \mathbb{R}^d$. Then
$$
H(x,u^{*}(x),|\nabla \phi(x)|)\le 0.
$$
Similarly, a function $u$ is a viscosity supersolution of \eqref{e:shjinE} if $u_{*}-\phi$ with $\phi \in C^{1}(\mathbb{R}^d)$ have a local minimum at $x \in \mathbb{R}^d$, then
$$
H(x,u_{*}(x),|\nabla \phi(x)|)\ge 0.
$$
Note that $|\nabla \phi (x)|$ is the norm of the gradient $\nabla\phi$,
which equals to the local slope $|\nabla_{d^E}\phi|(x)$.
To prove the equivalence of the viscosity solution and the metric viscosity solution of \eqref{e:shjinE} in Euclidean space,
it is sufficient to show that if property (ii) of Proposition \ref{t:sdtest} for $u$ holds, then the function $u$ is viscosity solution.
The converse is obvious from the definition of the viscosity solution.
Assume $u$ satisfies the following inequality.
For all $\hat{a}, \hat{x} \in \mathbb{R}^d$, $k \ge 0$,
if $u^{*}(x)-\frac{k}{2}d(\hat{a}, x)^2$ attains a local maximum at $x = \hat{x}$,
then the inequality
$$
H(\hat{x}, u^{*}(\hat{x}), k d^{E}(\hat{a}, \hat{x})) \le 0
$$
holds. In the same way in Proposition \ref{t:sdtest}, we consider
$$
(x, y) \mapsto u^{*}(x)-\phi(y)-\frac{1}{2\e}|x-y|^2-\frac{\alpha}{2}|y-\hat{x}|^2
$$
with positive $\e, \alpha$ and this function have a local maximum at some $(x_\e, y_\e) \in \mathbb{R}^d \times \mathbb{R}^d$.
The same calculation yields
$$
H(x_\e, u^{*}(x_\e), \frac{1}{\e}|x_\e-y_\e|) \le 0
$$ 
and $x_\e, y_\e \to \hat{x}$ as $\e \to 0$.
Since, in Euclidean space, the classical maximum principle holds, we get
$$
\nabla \phi(y_\e)-\frac{1}{\e}(x_\e-y_\e)+\alpha(y_\e-\hat{x}) = 0.
$$
Thus we can prove $u$ is a viscosity subsolution of \eqref{e:shjinE}.
Similar conclusions can be obtained for viscosity supersolutions.
In conclusion, we see that the viscosity solution can be characterized using the distance $d^{E}$, as in Proposition \ref{t:sdtest}.
Since, for Euclidean space $\mathbb{R}^d$, the Euclidean distance and the distance induced by the length of the curve are equal, so the viscosity solution and the metric viscosity solution of \eqref{e:shjinE} are equivalent.
\end{remark}

\section{Main results on stability}
\label{s:stability}

Let $(\mathsf{X}, \mathsf{d})$ be a proper metric space and for $n \in \mathbb{N}\cup\{\infty\}$ let $X_n$ be a compact subset of $\mathsf{X}$ equipped with the intrinsic metric $d_n$ from $\mathsf{d}$.
Here we say that $(\mathsf{X},\mathsf{d})$ be a proper if any closed ball in $\mathsf{X}$ is compact.
We assume that $(X_n, d_n)$ forms a geodesic space.
For simplicity, we denote the sequence $x_n \in X_n \to x \in X_\infty$ under the metric $\mathsf{d}$ by $(x_n) \in S(x)$.

\begin{theorem}[Stability]\label{Stability}
Assume \ref{i:h1}, \ref{i:h2}, and \ref{i:h3}.
\begin{itemize}
\item
Let $u^n$ be a sequence of a metric viscosity subsolution of \eqref{e:hj}.
If its upper semilimit 
$$
\overline{u}^\infty(x)
:= \limsup_{n \to \infty, x_n \in X_n \to x \in X_\infty}u^n(x_n)
= \sup_{(x_n) \in S(x)}\limsup_{n \to \infty}u^n(x_n)
$$
is upper semicontinuous in $d_\infty$, 
then $\overline{u}^\infty$ is a metric viscosity subsolution of \eqref{e:lhj}.
\item
Similarly, let $u^n$ is a sequence of a metric viscosity supersolution of \eqref{e:hj}.
If its lower semilimit
$$
\underline{u}^\infty(x)
:= \liminf_{n \to \infty, x_n \in X_n \to x \in X_\infty}u^n(x_n)
= \inf_{(x_n) \in S(x)}\liminf_{n \to \infty}u^n(x_n)
$$
is lower semicontinuous in $d_\infty$,
then $\underline{u}^\infty$ is a metric viscosity supersolution of \eqref{e:lhj}.
\end{itemize}
\end{theorem}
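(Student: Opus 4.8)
The plan is to verify the squared–distance characterisation of subsolutions given by Proposition~\ref{t:sdtest}: it suffices to show that whenever $\hat{a},\hat{x}\in X_\infty$ and $k\ge0$ are such that $\overline{u}^\infty(x)-\tfrac{k}{2}d_\infty(\hat{a},x)^2$ attains a strict local maximum at $x=\hat{x}$, then $H_\infty(\hat{x},\overline{u}^\infty(\hat{x}),k\,d_\infty(\hat{a},\hat{x}))\le0$. Here we use that the relaxed upper semilimit $\overline{u}^\infty$ is upper semicontinuous, so $(\overline{u}^\infty)^*=\overline{u}^\infty$, and that the relaxed upper limit of the sequence $(u^n)^*$ coincides with that of $u^n$ (a diagonal argument against the definition of the upper semicontinuous envelope), so $\overline{u}^\infty$ is also the relaxed upper limit of $(u^n)^*$. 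Fix such $\hat{a},\hat{x},k$ and choose $r>0$ small enough that $\hat{x}$ is the unique maximiser of $\overline{u}^\infty-\tfrac{k}{2}d_\infty(\hat{a},\cdot)^2$ on the compact set $K_\infty:=\overline{B}_{\mathsf d}(\hat{x},r)\cap X_\infty$. The supersolution statement is entirely symmetric, replacing maxima by minima, $(u^n)^*$ by $(u^n)_*$, and reversing the inequality; we treat only the subsolution case.

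\textbf{Transplanting the test function and localisation.}
By \ref{i:h1} one may pick $a_n\in X_n$ with $a_n\to\hat{a}$ in $\mathsf d$; set $\phi_n(x):=\tfrac{k}{2}d_n(a_n,x)^2$ and $\phi_\infty(x):=\tfrac{k}{2}d_\infty(\hat{a},x)^2$. The sets $K_n:=\overline{B}_{\mathsf d}(\hat{x},r)\cap X_n$ are compact and, by \ref{i:h1}, nonempty for large $n$, so the upper semicontinuous function $(u^n)^*-\phi_n$ attains its maximum over $K_n$ at some point $x_n$. The heart of the proof is the \emph{localisation lemma}: along a subsequence, $x_n\to\hat{x}$ in $\mathsf d$ and $(u^n)^*(x_n)\to\overline{u}^\infty(\hat{x})$. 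Its proof follows the Barles--Perthame relaxed-limits pattern. Pass to a subsequence with $x_n\to x_*$; by \ref{i:h1}, $x_*\in X_\infty$, and by \ref{i:h2}, $\phi_n(x_n)\to\phi_\infty(x_*)$. For a lower bound, take any $(y_n)\in S(\hat{x})$; eventually $y_n\in K_n$, so $(u^n)^*(x_n)-\phi_n(x_n)\ge u^n(y_n)-\phi_n(y_n)$, and letting $n\to\infty$ with \ref{i:h2} (so $\phi_n(y_n)\to\phi_\infty(\hat{x})$) and then taking the supremum over $(y_n)$ gives $\limsup_n[(u^n)^*(x_n)-\phi_n(x_n)]\ge\overline{u}^\infty(\hat{x})-\phi_\infty(\hat{x})$. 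For an upper bound, the diagonal argument against the definition of $(u^n)^*$ shows $\limsup_n(u^n)^*(x_n)\le\overline{u}^\infty(x_*)$, hence $\limsup_n[(u^n)^*(x_n)-\phi_n(x_n)]\le\overline{u}^\infty(x_*)-\phi_\infty(x_*)$. Since $\hat{x}$ is the strict maximiser on $K_\infty$, the two bounds force $x_*=\hat{x}$; passing to a further subsequence realising the $\limsup$ then yields $(u^n)^*(x_n)\to\overline{u}^\infty(\hat{x})$.

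\textbf{Passing to the limit.}
For large $n$ we have $x_n\to\hat{x}$, so $x_n$ lies in the $\mathsf d$-interior of $K_n$; since the intrinsic metric satisfies $d_n\ge\mathsf d$, a $d_n$-ball around $x_n$ is contained in that interior, and therefore $(u^n)^*-\phi_n$ has a $d_n$-local maximum at $x_n$. As $\phi_n$ is precisely the squared distance function $\tfrac{k}{2}d_n(a_n,\cdot)^2$ on the locally compact geodesic space $(X_n,d_n)$, condition~(ii) of Proposition~\ref{t:sdtest} applied to the subsolution $u^n$ gives
$$
H_n\bigl(x_n,(u^n)^*(x_n),k\,d_n(a_n,x_n)\bigr)\le0.
$$
Letting $n\to\infty$ along the chosen subsequence: \ref{i:h2} gives $d_n(a_n,x_n)\to d_\infty(\hat{a},\hat{x})$, the localisation lemma gives $(u^n)^*(x_n)\to\overline{u}^\infty(\hat{x})$, and \ref{i:h3} then yields $H_\infty(\hat{x},\overline{u}^\infty(\hat{x}),k\,d_\infty(\hat{a},\hat{x}))\le0$, which is exactly what Proposition~\ref{t:sdtest} demands. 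The supersolution case runs verbatim with the obvious sign changes.

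\textbf{Main obstacle.}
The delicate point is the localisation lemma: one must prevent the maximisers $x_n$ from escaping towards $\partial K_n$ and must identify the limit of $(u^n)^*(x_n)$ with $\overline{u}^\infty(\hat{x})$, all while both the ambient points and the test function $\phi_n$ vary with $n$. This is exactly where \ref{i:h1} and \ref{i:h2} enter, the latter also being what transfers $|\nabla_{d_n}\phi_n|(x_n)=k\,d_n(a_n,x_n)$ to $|\nabla_{d_\infty}\phi_\infty|(\hat{x})=k\,d_\infty(\hat{a},\hat{x})$ as in \eqref{e:convslope}; the restriction to squared distance test functions provided by Proposition~\ref{t:sdtest} is what makes this reduction possible at all. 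Everything else is routine relaxed-semilimit bookkeeping.
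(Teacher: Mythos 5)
Your proof is correct and follows essentially the same route as the paper's: reduce to the squared-distance characterisation of Proposition~\ref{t:sdtest}, transplant the test point via \ref{i:h1}, locate near-maximisers $x_n$ and use the strictness of the maximum in a Barles--Perthame relaxed-limit argument to force $x_n\to\hat{x}$ and $(u^n)^*(x_n)\to\overline{u}^\infty(\hat{x})$, then pass to the limit in $H_n(x_n,\cdot,k\,d_n(a_n,x_n))\le0$ using \ref{i:h2} and \ref{i:h3}. Your write-up is in fact slightly more careful than the paper's on two routine points (restricting the maximisation to a ball around $\hat{x}$ rather than all of $X_n$, and working with $(u^n)^*$), but the underlying argument is identical.
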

\begin{remark}
The assumption \ref{i:h2} must be essential, because the distance function $u_{n}(x)=d_{n}(a_{n},x)$ is characterized by the solution of the boundary value problem of the Eikonal equation of the form
$$
|\nabla_{d_{n}}u_{n}|(x) = 1 \quad \text{for $x \in X_{n}, x \ne a_n$,} \quad u_{n}(a_{n})=0.
$$
\end{remark}

\begin{proof}
We only show the theorem for the subsolution since the proof for the supersolution is similar.
The upper semilimit $\overline{u}^\infty$ is upper semicontinuous.
Fix $\hat{a}, \hat{x} \in X_\infty$, $k \ge 0$
and assume that $\overline{u}^\infty(x)-\frac{k}{2}d_\infty(\hat{a}, x)^2$ attains a strict local maximum at $x = \hat{x}$ over $X_\infty$.
By the assumption \ref{i:h1}, there exists sequences $a_n, \tilde{x}_n \in X_n$ satisfying $a_n \to \hat{a}$, $\tilde{x}_n \to \hat{x}$ in $\mathsf{d}$ and $u^n(\tilde{x}_n) \to \overline{u}^\infty(\hat{x})$.
For $n \in \mathbb{N}$ consider the function
$$
x \in X_n \mapsto u_n(x)-\frac{k}{2}d_n(a_n, x)^2.
$$
Since this is an upper semicontinuous function on $(X_n, d_n)$,
there is a local maximum point $x = x_n$.
Note that
$$
u^n(\tilde{x}_n)-\frac{k}{2}d_n(a_n, \tilde{x}_n)^2 \le u^n(x_n)-\frac{k}{2}d_n(a_n, x_n)^2
$$
and the left-hand side converges to $\overline{u}^\infty(\hat{x})-\frac{k}{2}d_\infty(\hat{a}, \hat{x})^2$.
Therefore, since $\hat{x}$ is a strict maximum point, $x_n \to \hat{x}$ in $\mathsf{d}$ and $u^n(x_n) \to \overline{u}^\infty(\hat{x})$.
Now, take the limit of
$$
H_n(x_n, u^n(x_n), k d_n(a_n, x_n)) \le 0.
$$
In view of \ref{i:h2} and \ref{i:h3} we have
$$
H_\infty(\hat{x}, \overline{u}^\infty(\hat{x}), k d_\infty(\hat{a}, \hat{x})) \le 0,
$$
which completes the proof.
\end{proof}

\begin{corollary}
Assume \ref{i:h1}, \ref{i:h2}, \ref{i:h3} and that there exists $\lambda > 0$ such that $H_n(x, u, p)-\lambda u$ is non-decreasing for every $n \in \mathbb{N}\cup\{\infty\}$.
Let $u^n$ be a unique metric viscosity solutions of \eqref{e:hj}.
Then, $u^n$ converges to a unique metric viscosity solution $u^\infty$ of \eqref{e:lhj} uniformly.
\end{corollary}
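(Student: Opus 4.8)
The plan is to run the classical half-relaxed limits argument of Barles and Perthame, adapted to the varying ambient sets $X_n$, by combining the stability Theorem~\ref{Stability} with the comparison principle for the limit equation~\eqref{e:lhj}. The three ingredients are: a uniform bound on $(u^n)$, so that the semilimits $\overline{u}^\infty, \underline{u}^\infty$ from Theorem~\ref{Stability} are finite; the identification $\overline{u}^\infty = \underline{u}^\infty$ via comparison; and the passage from this coincidence to uniform convergence.

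First I would check the uniform bound. Since $H_n(x,u,p)-\lambda u$ is non-decreasing in $u$ with $\lambda>0$ independent of $n$, and $H_n(\cdot,0,0)$ is bounded uniformly in $n$ (part of the standing well-posedness hypotheses), one sees that for $C$ large enough, independent of $n$, the constant function $C$ is a metric viscosity supersolution and $-C$ a metric viscosity subsolution of~\eqref{e:hj}; the unique solution, obtained between these barriers, then satisfies $-C \le u^n \le C$ on $X_n$ for every $n$. Consequently $\overline{u}^\infty$ and $-\underline{u}^\infty$ are bounded from above on the compact set $X_\infty$.

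Next, by Theorem~\ref{Stability}, $\overline{u}^\infty$ is a metric viscosity subsolution and $\underline{u}^\infty$ a metric viscosity supersolution of~\eqref{e:lhj}, and from the definitions of $\limsup$ and $\liminf$ together with $S(x)\neq\emptyset$ (which follows from~\ref{i:h1}) one has $\underline{u}^\infty \le \overline{u}^\infty$ pointwise on $X_\infty$. The comparison principle for~\eqref{e:lhj} --- whose structural hypotheses for $H_\infty$ follow from~\ref{i:h3} and the assumptions on $H_n$, the condition with $\nu>0$ being supplied by the monotonicity in $u$ --- then yields the reverse inequality $\overline{u}^\infty \le \underline{u}^\infty$. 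Hence $\overline{u}^\infty = \underline{u}^\infty =: u^\infty$, which is simultaneously upper and lower semicontinuous on the compact $X_\infty$, hence continuous, and is a metric viscosity solution of~\eqref{e:lhj}; its uniqueness is again the comparison principle.

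Finally I would upgrade the equality $\overline{u}^\infty=\underline{u}^\infty$ to uniform convergence $u^n \to u^\infty$. The point is that this equality is equivalent to $u^n(x_n)\to u^\infty(x)$ for every $x\in X_\infty$ and every $(x_n)\in S(x)$. To obtain convergence uniform over the moving sets $X_n$ I would argue by contradiction: if it failed there would be $\delta>0$, a subsequence, and points $x_n\in X_n$ with nearest points $y_n\in X_\infty$, so that $\mathsf{d}(x_n,y_n)\le\mathsf{d}_H(X_\infty,X_n)\to 0$ while $|u^n(x_n)-u^\infty(y_n)|\ge\delta$; since $X_\infty$ is compact and $\overline{\bigcup_n X_n}$ is relatively compact in the locally compact space $(\mathsf{X},\mathsf{d})$, one may pass to a further subsequence along which $x_n\to x\in X_\infty$, whence $(x_n)\in S(x)$, so $u^n(x_n)\to u^\infty(x)$ and $u^\infty(y_n)\to u^\infty(x)$ by continuity --- a contradiction. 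I expect this last step to be the main obstacle, namely pinning down the right notion of uniform convergence for functions defined on the varying domains $X_n$ and extracting the contradiction cleanly from the coincidence of the two relaxed semilimits; the uniform bound and the application of the comparison principle are routine.
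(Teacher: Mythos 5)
Your proposal is correct and follows essentially the same route as the paper: apply Theorem~\ref{Stability} to get that $\overline{u}^\infty$ is a subsolution and $\underline{u}^\infty$ a supersolution of~\eqref{e:lhj}, then invoke the comparison principle to conclude $\overline{u}^\infty=\underline{u}^\infty$ and hence convergence. You additionally spell out the uniform bound via constant barriers and the upgrade from coincidence of the semilimits to uniform convergence, both of which the paper's two-line proof leaves implicit.
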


\begin{proof}
Using stability, the upper semilimit $\overline{u}^\infty$ is a metric viscosity subsolution and the lower semilimit $\underline{u}^\infty$ is a metric viscosity supersolution of the limit equation \eqref{e:lhj}.
Now, by the comparison principle, we see that $\overline{u}^\infty \le \underline{u}^\infty$ and therefore $\overline{u}^\infty = \underline{u}^\infty$ is a metric viscosity solution.
\end{proof}

\begin{remark}
    Basically, we mention that the unique existence of viscosity solutions is guaranteed when the comparison theorem and Perron$'$s method can be used. For the detailed conditions required for the Hamiltonian, see \cite{GS15, NN18}. For example, the unique existence of the viscosity solution of (\ref{e:fhj}) is guaranteed for the quadratic Hamiltonian.
\end{remark}

\section{Example: Self-similar sets}
\label{s:ifs}

\subsection{Iterated function systems}

Iterated function systems are self-similar sets in which for a given family of contraction mappings $F_1, \cdots F_N$ with $N = 1, 2, 3, \cdots$,
the Hutchinson operator
$$
\mathcal{F}(K) = F_1(K)\cup\cdots\cup F_N(K)
$$
is a contraction mapping on the space of non-empty compact subsets of $\mathsf{X}$ under the Hausdorff metric $\mathsf{d}_H$.
Hence, a unique non-empty compact set $K$ is a fixed point of the Hutchinson operator $\mathcal{F}$
and that for any non-empty compact set $K_0$ the iteration
$$
K_{n+1} = \mathcal{F}(K_n) = F_1(K_n)\cup\cdots\cup F_N(K_n)
$$
converges to $K$ in the sense of
$$
K = \bigcap_{n=0}^\infty \mathrm{cl}\left(\bigcup_{l=n}^\infty K_l\right),
$$
where $\mathrm{cl}$ means the closure with respect to the topology $\mathsf{d}$.
We call $K_n$ \emph{pre-fractals} of the fractal $K$.
For details on the iterated function systems \cite{F14}.

\subsection{The Vicsek fractal}

Consider the iterated function system on two dimensional Euclidean space $(\mathbb{R}^2, d^E)$ defined by
$$
F_1(x, y) = \left(\frac{1}{3}x, \frac{1}{3}y\right),
\quad F_2(x, y) = \left(\frac{1}{3}x+2, \frac{1}{3}y\right),
\quad F_3(x, y) = \left(\frac{1}{3}x, \frac{1}{3}y+2\right),
$$
$$
F_4(x, y) = \left(\frac{1}{3}x-2, \frac{1}{3}y\right),
\quad F_5(x, y) = \left(\frac{1}{3}x, \frac{1}{3}y-2\right),
$$
and pre-fractals $K_n$ staring from
$$
K_0 = \{ (x, 0) \mid -3 \le x \le 3 \}\cup\{ (0, y) \mid -3 \le y \le 3 \}.
$$
The limit fractal $K$ is called the \emph{Vicsek fractal}.
$K_n$ is embedded in $K$ preserving distance.

\begin{theorem}[Approximation of the Vicsek fractal]
The prefractal approximation $(K_n, d^K_n)$ satisfies condition \ref{i:h2},
i.e.,\ for arbitrary sequences $a_n \in K_n \to a \in K$ and $b_n \in K_n \to b \in K$ under the metric $d^K$,
the convergence
$$
d^K_n(a_n, b_n) \to d^K(a, b)
$$
holds.
\end{theorem}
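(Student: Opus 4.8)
The plan is to prove more than the stated limit, namely the identity $d^K_n(a,b)=d^K(a,b)$ for every $n$ and all $a,b\in K_n$, and then to deduce the assertion from continuity of $d^K$ on the compact set $K$. Write $F_w=F_{w_1}\circ\cdots\circ F_{w_m}$ for $w=(w_1,\dots,w_m)\in\{1,\dots,5\}^m$, so that $K=\bigcup_{|w|=m}F_w(K)$ and $K_n=\bigcup_{|w|=n}F_w(K_0)$ with $F_w(K_0)\subset F_w(K)$. I will use the standard structure of the (``$+$''-shaped) Vicsek fractal: $K$ is a dendrite, i.e.\ a uniquely arcwise connected, locally connected continuum; removing the finitely many level-$m$ junction points leaves $K$ with the cell interiors as its connected components; and two distinct level-$m$ cells $F_w(K),F_{w'}(K)$ are either disjoint or meet at a single point which, because for $j\ne1$ the endpoint $e_j\in V_0:=\{(\pm3,0),(0,\pm3)\}$ is the fixed point of $F_j$, is an endpoint $F_w(v)$ of the sub-cross $F_w(K_0)$ for some $v\in V_0$. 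It follows that the geodesic joining two points of $K$ is the unique arc between them, that it meets each level-$m$ cell $F_w(K)$ in at most one subarc, and that it enters and leaves every such cell only through endpoints of $F_w(K_0)$.

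The key lemma is then essentially free: for all $p,q\in K_0$ the geodesic of $K$ from $p$ to $q$ lies in $K_0$. Indeed the polygonal ``axis path'' running along the arms of the cross $K_0$ through the origin from $p$ to $q$ is a simple arc contained in $K_0\subset K$; by unique arcwise connectedness of $K$ it \emph{is} the arc from $p$ to $q$, hence the geodesic. Applying this inside a cell $F_w(K)$ (itself a rescaled copy of $K$): for any $r_1,r_2\in K_0$ the geodesic of $K$ from $F_w(r_1)$ to $F_w(r_2)$ equals $F_w$ of the geodesic of $K$ from $r_1$ to $r_2$, and so lies in $F_w(K_0)\subset K_n$ whenever $|w|=n$. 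The part of the argument that needs real care is precisely the structural input of the previous paragraph: that $K$ is uniquely arcwise connected and that every junction is an endpoint of the appropriate sub-cross.

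Granting this, fix $a,b\in K_n$, say $a\in F_{w_a}(K_0)$ and $b\in F_{w_b}(K_0)$ with $|w_a|=|w_b|=n$ (possibly $w_a=w_b$), and let $\gamma^*$ be the geodesic of $K$ from $a$ to $b$. Decompose $\gamma^*$ into its pieces $\gamma^*\cap F_w(K)$ over the level-$n$ cells it meets. Each such piece is a subarc of $\gamma^*$, hence a geodesic of $F_w(K)$, joining two points that are either endpoints of $F_w(K_0)$ or equal to $a$ or $b$; in every case these points have the form $F_w(r)$ with $r\in K_0$ (for $a$ and $b$ use $F_{w_a}^{-1}(a),F_{w_b}^{-1}(b)\in K_0$), so by the lemma that piece lies in $F_w(K_0)\subset K_n$. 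Hence $\gamma^*\subset K_n$, which gives $d^K_n(a,b)\le\mathrm{length}(\gamma^*)=d^K(a,b)$; the reverse inequality is trivial since every path in $K_n\subset K$ is a path in $K$. Thus $d^K_n=d^K$ on $K_n$ for all $n$.

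It remains to note that $d^K$ is continuous on $K$ with respect to the ambient Euclidean metric $d^E$. Given $\e>0$, choose $m$ with $3^{-m}\,\mathrm{diam}_{d^K}(K)<\e$, so each level-$m$ cell has $d^K$-diameter $<\e$ (as $F_w$ scales $d^K$ by $3^{-m}$), and let $\delta>0$ be the smallest Euclidean distance between two disjoint level-$m$ cells; then any $x,y\in K$ with $d^E(x,y)<\delta$ lie in a common level-$m$ cell or in two cells sharing a junction $J$, whence $d^K(x,y)\le d^K(x,J)+d^K(J,y)<2\e$. Combining this with the identity above, for $a_n\in K_n\to a\in K$ and $b_n\in K_n\to b\in K$ in $d^E$ we conclude $d^K_n(a_n,b_n)=d^K(a_n,b_n)\to d^K(a,b)$, which is precisely condition \ref{i:h2}.
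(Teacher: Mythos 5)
Your proof is correct, and at the top level it follows the same route as the paper: both reduce the theorem to the identity $d^K_n = d^K$ on $K_n$ and then conclude by a continuity argument for $d^K$. The difference lies in what is actually proved. The paper's proof is two lines: it invokes the assertion, stated just before the theorem without proof, that $K_n$ is embedded in $K$ preserving distance, and then applies the triangle inequality $|d^K(a_n,b_n)-d^K(a,b)|\le d^K(a,a_n)+d^K(b,b_n)$ (which suffices because in the statement the sequences converge under $d^K$, so no separate comparison with $d^E$ is needed). You instead supply a complete proof of that isometric embedding, which is the genuinely nontrivial content here: you use the dendrite structure of the Vicsek fractal (unique arcs, level-$m$ cells meeting only at single points which are endpoints of the sub-crosses), prove the key lemma that the arc of $K$ between two points of $K_0$ lies in $K_0$, rescale it into each cell, and decompose an arbitrary geodesic into its traces on level-$n$ cells to show it stays in $K_n$. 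Your writeup correctly isolates the structural facts being used and where care is needed (unique arcwise connectedness, junctions being endpoints of sub-crosses). Your closing estimate showing $d^K$ is continuous with respect to $d^E$ proves slightly more than the statement requires, since $d^E\le d^K$ makes $d^K$-convergence the stronger hypothesis; this is harmless and in fact makes the result usable under ambient Euclidean convergence as in condition \ref{i:h2}.
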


\begin{proof}
Since $d^K_n = d^K$,
we see that
$$
|d^K_n(a_n, b_n)-d^K(a, b)|
= |d^K(a_n, b_n)-d^K(a, b)|
\le d^K(a, a_n)+d^K(b, b_n)
\to 0.
$$
This completes the proof.
\end{proof}

\subsection{The Sierpi\'{n}ski gasket}

Consider the vertices of the unit triangle on two dimensional Euclidean space $(\mathbb{R}^2, d^E)$ given by
$$
O_1 = \left(-\frac{1}{2}, 0\right),
\quad O_2 = \left(\frac{1}{2}, 0\right),
\quad O_3 = \left(0, \frac{\sqrt{3}}{2}\right).
$$
The \emph{Sierpi\'{n}ski gasket} is generated by the family of contraction mappings
$$
F_i(y) = \frac{1}{2}(y-O_i)+O_i \quad \text{for $i = 1, 2, 3$.}
$$
Consider the intrinsic metric $d^K$ from the Euclidean metric $d^E$.
\begin{lemma}[{\cite[Lemma 2.12]{B98}}]
The metric $d^K$ is equivalent to $d^E$.
Moreover, the inequality
$$
\text{$d^E(a, b) \le d^K(a, b) \le 8 d^E(a, b)$ for all $a, b \in K$}
$$
holds.
\end{lemma}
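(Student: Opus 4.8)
The plan is to establish the bound $d^E(a,b) \le d^K(a,b) \le 8\, d^E(a,b)$ directly from the self-similar structure of $K$, following the cited Lemma 2.12 of \cite{B98}. The lower bound is immediate: since $K$ is equipped with the intrinsic metric $d^K$ induced from the ambient Euclidean metric $d^E$, every curve in $K$ joining $a$ to $b$ has Euclidean length at least $d^E(a,b)$, so taking the infimum over such curves gives $d^E(a,b) \le d^K(a,b)$. This requires no gasket-specific input, only that $K$ is path-connected (which it is, being a connected self-similar set) and that the intrinsic metric dominates the chordal one.

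For the upper bound I would argue by scaling. Write $K = F_1(K) \cup F_2(K) \cup F_3(K)$ where each $F_i$ is a similarity with ratio $1/2$. First I would verify the claim on a coarse scale: there is a constant $C$ such that any two points $a, b \in K$ can be joined inside $K$ by a path of Euclidean length at most $C \cdot \operatorname{diam}(K)$ whenever $d^E(a,b)$ is comparable to $\operatorname{diam}(K)$ — concretely, one routes through the corner points $O_1, O_2, O_3$, which lie in $K$ and are mutually joined by paths in $K$ of bounded length. Then I would bootstrap: given arbitrary $a, b \in K$, choose the smallest word $w = i_1 \cdots i_m$ such that both points lie in the cell $F_w(K) := F_{i_1} \circ \cdots \circ F_{i_m}(K)$, or such that $a$ and $b$ lie in two cells of the same generation sharing a vertex. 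Since $F_w(K)$ has Euclidean diameter $2^{-m}\operatorname{diam}(K)$ and, by minimality, $a$ and $b$ cannot both be pushed into a single subcell, one gets $d^E(a,b) \gtrsim 2^{-m}\operatorname{diam}(K)$; meanwhile the coarse-scale estimate applied inside the similar copy $F_w(K)$ (or across the two adjacent copies, joined through their common vertex) yields a connecting path in $K$ of length $\lesssim 2^{-m}\operatorname{diam}(K)$. Combining these two comparisons with the constant $\operatorname{diam}(K) = 1$ for the unit-side gasket produces the explicit factor $8$; one checks the worst case (two points in adjacent subcells, each near the far corner from the shared vertex) gives exactly this constant.

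The main obstacle is the combinatorial bookkeeping in the bootstrap step — namely, organizing the finitely many cases for how $a$ and $b$ sit relative to the cell decomposition (same cell, adjacent cells sharing a vertex, and confirming that in the gasket two cells of the same generation meet in at most one point) and then tracking the optimal constant carefully enough to land on $8$ rather than some larger number. Since the inequality is quoted verbatim from \cite[Lemma 2.12]{B98}, I would in fact cite that reference for the precise constant and only sketch the scaling argument above, as the sharp value of the constant is not needed for the stability application; what matters downstream is merely that $d^K$ and $d^E$ are bi-Lipschitz equivalent, which makes Hausdorff convergence in $d^E$ interchangeable with Hausdorff convergence in $d^K$.
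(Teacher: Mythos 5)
The paper does not actually prove this lemma: it is quoted verbatim from Barlow's lecture notes (\cite[Lemma 2.12]{B98}) and used as a black box, so there is no internal proof to compare against. Your sketch is a faithful reconstruction of the standard argument behind that reference. The lower bound is indeed immediate from the definition of the intrinsic metric. For the upper bound your outline is sound, and you correctly identify the one genuinely delicate point: the ``smallest common cell'' alone does not control the ratio, because $a$ and $b$ can lie in distinct subcells while both sitting arbitrarily close to the shared vertex, making $d^E(a,b)$ much smaller than the cell diameter; the fix is exactly the ``largest generation at which the cells containing $a$ and $b$ coincide or touch'' case you mention, combined with the fact that any two distinct level-one subcells of a gasket cell meet in exactly one point, and that every point of $K$ reaches each corner of its cell by a path whose length is controlled by a geometric series. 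Two small caveats: your assertion that a specific worst-case configuration ``gives exactly'' the constant $8$ is not substantiated — in Barlow the constant arises as a non-sharp upper bound from summing the geometric series, not from identifying an extremal pair — and you should be explicit that the Euclidean separation of non-touching same-generation cells is bounded below by a fixed multiple of their diameter, which is the step that converts the combinatorics into the inequality $d^E(a,b)\gtrsim 2^{-m}$. Since you defer to the citation for the precise constant and correctly observe that only the bi-Lipschitz equivalence is used downstream (in the Cantor-type approximation estimate), the proposal is acceptable as it stands.
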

For details on the metric structure on the Sierpi\'{n}ski gasket,
we refer the readers to \cite{B98, GT98, SOD18, SOD18b}.

There are three prefractal approximations of the Sierpi\'{n}ski gasket $K$.
The first one is graph approximation $V_n$ starting from the vertices $V_0 = \{ O_1, O_2, O_3 \}$.
The second one is Cantor-type approximation $D_n$ starting from with the convex hull $D_0$ of $V_0$, which is the unit triangle.
The last one is network approximation $G_n$ starting from the boundary $G_0$ of $D_0$, which is the boundary of the unit triangle.
One can introduce natural geodesic metric $d^G_n$ and $d^D_n$ into $G_n$ and $D_n$ respectively,
and natural graph structure into $V_n$ with the metric $d^V_n$.
\begin{lemma}
The restriction of the metrics $d^K$, $d^G_n$ and $d^D_n$ to the vertices $V_n$ is nothing but $d^V_n$\rm{:}
$$
\text{$d^K(a, b) = d^G_n(a, b) = d^D_n(a, b) = d^V_n(a, b)$ for all $a, b \in V_n$}
$$
holds.
\end{lemma}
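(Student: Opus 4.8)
The plan is to derive all four equalities from the nested inclusions $V_n\subseteq G_n\subseteq K\subseteq D_n$ together with a single induction on $n$. First I would record the inclusions. Each of the three vertices of a level-$n$ triangle lies on one of the edges constituting $G_n$, so $V_0\subseteq G_0$, and hence $V_n=\bigcup_{|w|=n}F_w(V_0)\subseteq\bigcup_{|w|=n}F_w(G_0)=G_n$. Next, every side of the unit triangle is the union of its two halves $F_i(\mathrm{side})$, so $G_0\subseteq\mathcal{F}(G_0)$; since $\mathcal{F}$ is monotone this makes $\mathcal{F}^m(G_0)$ increasing in $m$, and the characterization of $K$ from Section~\ref{s:ifs} (applied with starting set $G_0$) gives $G_0\subseteq K$, whence $G_n=\bigcup_{|w|=n}F_w(G_0)\subseteq\bigcup_{|w|=n}F_w(K)\subseteq K$. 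Finally $\mathcal{F}(D_0)\subseteq D_0$, so the same characterization (with starting set $D_0$) gives $K=\bigcap_m D_m\subseteq D_n$. Because the intrinsic length metric of a subset of $(\mathbb{R}^2,d^E)$ can only grow as the subset shrinks — fewer curves are admissible — these inclusions yield, for all $a,b\in V_n$,
$$
d^D_n(a,b)\le d^K(a,b)\le d^G_n(a,b)\le d^V_n(a,b),
$$
so it suffices to prove the reverse estimate $d^V_n(a,b)\le d^D_n(a,b)$ for $a,b\in V_n$.

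This reverse estimate I would prove by induction on $n$. When $n=0$ the set $D_0$ is convex, so $d^D_0$ is the Euclidean metric, which on distinct points of $V_0=\{O_1,O_2,O_3\}$ equals $1=d^V_0$. For the step from $n$ to $n+1$ write $D_{n+1}=F_1(D_n)\cup F_2(D_n)\cup F_3(D_n)$ and use the finite ramification of the gasket (see~\cite{B98}): distinct sub-copies meet only in the three side-midpoints $m_{ij}:=F_i(O_j)=F_j(O_i)$, that is $F_i(D_n)\cap F_j(D_n)=\{m_{ij}\}$ for $i\ne j$, while $\{m_{12},m_{13},m_{23}\}\subseteq V_1\subseteq V_{n+1}$ and $V_{n+1}\cap F_i(D_n)=F_i(V_n)$. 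Fix $a,b\in V_{n+1}$. Since $D_{n+1}$ is a compact geodesic space there is a unit-speed shortest path $\sigma$ from $a$ to $b$ of length $d^D_{n+1}(a,b)$; being length-minimizing, $\sigma$ is injective, so it meets the three midpoints at most three times altogether, say at $\tau_1<\dots<\tau_r$ with $r\le 3$. Setting $\tau_0=0$, $\tau_{r+1}=d^D_{n+1}(a,b)$, $v_l=\sigma(\tau_l)$, $v_0=a$, $v_{r+1}=b$, each restriction $\sigma|_{[\tau_l,\tau_{l+1}]}$ contains no midpoint in its interior, hence stays in one sub-copy $F_{i_l}(D_n)$ and runs between two points $v_l,v_{l+1}\in F_{i_l}(D_n)\cap V_{n+1}=F_{i_l}(V_n)$. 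Since $F_{i_l}$ is a similarity of ratio $\tfrac12$, the intrinsic metric of $F_{i_l}(D_n)$ is $\tfrac12$ times the $F_{i_l}$-pushforward of $d^D_n$, and $F_{i_l}$ also halves distances in the level-$(n+1)$ subgraph carried by $F_{i_l}(V_n)$; with the inductive hypothesis $d^D_n=d^V_n$ on $V_n$ this gives $\mathrm{length}(\sigma|_{[\tau_l,\tau_{l+1}]})\ge d^{F_{i_l}(D_n)}(v_l,v_{l+1})\ge d^V_{n+1}(v_l,v_{l+1})$. Summing over the at most four subintervals and using the triangle inequality for $d^V_{n+1}$,
$$
d^D_{n+1}(a,b)=\sum_{l=0}^{r}\mathrm{length}(\sigma|_{[\tau_l,\tau_{l+1}]})\ge\sum_{l=0}^{r}d^V_{n+1}(v_l,v_{l+1})\ge d^V_{n+1}(a,b),
$$
which closes the induction and proves the lemma.

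The main obstacle is the inductive step, where two points need care. The first is the combinatorial input that distinct sub-copies $F_i(D_n)$ of $D_{n+1}$ intersect in a single point — the finite ramification (post-critical finiteness) of the gasket — which is what forces the clean sub-copy-by-sub-copy decomposition; this is classical and can be cited from~\cite{B98}. The second is that a general rectifiable curve in $D_{n+1}$ might oscillate between sub-copies infinitely often, which is why I pass to a shortest path and invoke its injectivity: this caps the number of sub-copy crossings at three and makes the decomposition finite, so the inductive hypothesis applies on each of the at most four pieces and the pieces recombine by the ordinary triangle inequality. Everything else — the set inclusions, the monotonicity of intrinsic metrics under inclusion, the convex base case, the rescaling behaviour of $F_i$, and the identity $V_{n+1}\cap F_i(D_n)=F_i(V_n)$ — is routine.
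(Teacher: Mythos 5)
Your proof is correct, and it takes a genuinely different route from the paper, which does not argue the lemma at all but instead appeals to the explicit (code-representation) formula for the intrinsic metric on the Sierpi\'{n}ski gasket from the cited literature. Your route --- sandwiching the four metrics via the inclusions $V_n\subseteq G_n\subseteq K\subseteq D_n$ and monotonicity of intrinsic metrics under inclusion, then closing the loop with the induction $d^D_n\ge d^V_n$ on $V_n$ using finite ramification and the injectivity of a minimizing geodesic --- is self-contained and arguably more robust: it would transfer to other post-critically finite self-similar sets for which no explicit distance formula is available, whereas the paper's appeal to the explicit formula is shorter but specific to the gasket. Two small points to tighten. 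First, the inequality $d^G_n\le d^V_n$ on $V_n$ is not a consequence of a set inclusion (the intrinsic metric of the finite set $V_n$ is degenerate); it follows from the separate, trivial observation that each edge of the graph $V_n$ is realized by a straight segment of the same length inside $G_n$, so every graph path yields a curve in $G_n$ of equal length. Second, the existence of the unit-speed shortest path $\sigma$ in $(D_{n+1},d^D_{n+1})$ deserves a word: one should note that this is a compact, complete length space whose intrinsic metric is bi-Lipschitz to $d^E$ (proved as in Barlow's estimate for $d^K$), so that the Hopf--Rinow theorem applies; alternatively one can avoid geodesics by decomposing an almost-minimizing curve over the at most countably many components of the complement of the preimage of $\{m_{12},m_{13},m_{23}\}$ and using countable additivity of arc length. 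Neither point affects the substance of the argument.
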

We omit the proof because this is obvious from the explicit formula of the metric of the Sierpi\'{n}ski gasket.

The network and Cantor-type approximations satisfy condition \ref{i:h2}.

\begin{theorem}[Network approximation of the Sierpi\'{n}ski gasket]
The network approximation $(G_n, d^G_n)$ satisfies condition \ref{i:h2}\rm{:} for arbitrary sequences $a_n \in G_n \to a \in K$ and $b_n \in G_n \to b \in K$ under the metric $d^K$,
the convergence
$$
d^G_n(a_n, b_n) \to d^K(a, b)
$$
holds.
\end{theorem}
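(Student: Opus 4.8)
The plan is to derive the convergence from a single \emph{uniform} comparison between $d^G_n$ and $d^K$ on the network $G_n$: there is a constant $C$, independent of $n$, with
$$
d^K(a,b) \le d^G_n(a,b) \le d^K(a,b) + C\,2^{-n} \qquad \text{for all } a,b \in G_n.
$$
The left inequality is free: since $G_n \subseteq K$, every rectifiable curve in $G_n$ joining $a$ to $b$ is admissible in the definition of the length metric $d^K$, so $d^G_n \ge d^K$ on $G_n$. Granting the right inequality and applying it to the points $a_n,b_n \in G_n$ furnished by the hypothesis, we obtain
$$
d^K(a_n,b_n) \le d^G_n(a_n,b_n) \le d^K(a_n,b_n) + C\,2^{-n}.
$$
Since $a_n \to a$ and $b_n \to b$ in $d^K$, the triangle inequality gives $d^K(a_n,b_n) \to d^K(a,b)$, and the squeeze forces $d^G_n(a_n,b_n) \to d^K(a,b)$, which is the assertion.

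So the real work is the right-hand inequality. Fix $a,b \in G_n$. Each of them lies on some edge of the level-$n$ network — a segment of Euclidean length $2^{-n}$ whose two endpoints lie in $V_n$, and $V_n \subseteq G_n$. Pick such an endpoint $a' \in V_n$ of the edge through $a$, and likewise $b' \in V_n$ for $b$. Walking along that edge yields $d^G_n(a,a') \le 2^{-n}$ and $d^G_n(b,b') \le 2^{-n}$ (hence also $d^K(a,a'),\,d^K(b,b') \le 2^{-n}$, as $d^K \le d^G_n$). By the lemma identifying the restrictions of $d^K$, $d^G_n$ and $d^V_n$ to $V_n$, we have $d^G_n(a',b') = d^V_n(a',b') = d^K(a',b')$. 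Now two triangle inequalities — first for $d^G_n$, then for $d^K$ — give
$$
d^G_n(a,b) \le d^G_n(a,a') + d^K(a',b') + d^G_n(b',b) \le 2\cdot 2^{-n} + d^K(a',b') \le 4\cdot 2^{-n} + d^K(a,b),
$$
so $C = 4$ works.

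The only step I expect to require care is the elementary bookkeeping in the previous paragraph: that the network approximation $G_n$ really is a finite union of level-$n$ segments of Euclidean diameter $2^{-n}$ with endpoints in $V_n$, and that $V_n \subseteq G_n \subseteq K$. Once that is granted, no analysis of geodesics on the Sierpi\'{n}ski gasket itself enters: the already-established exact coincidence $d^K = d^G_n = d^V_n$ on $V_n$ carries the whole proof, which then reduces to the triangle inequality and a squeeze. (The comparability $d^E \le d^K \le 8\,d^E$ is not needed here, since convergence in the hypothesis is measured in $d^K$.)
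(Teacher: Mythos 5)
Your proof is correct and follows essentially the same route as the paper's: both pass to nearby vertices of $V_n$ at distance at most $2^{-n}$, invoke the lemma identifying $d^G_n$ with $d^K$ on $V_n$, and conclude by triangle inequalities. Packaging this as the uniform two-sided comparison $d^K \le d^G_n \le d^K + 4\cdot 2^{-n}$ on $G_n$ followed by a squeeze is only a cosmetic reorganization of the paper's direct estimate of $|d^G_n(a_n,b_n)-d^K(a,b)|$.
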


\begin{proof}
For each $a_n, b_n$ take the nearest point $\tilde{a}_n, \tilde{b}_n$ of $V_n$ in the metric $d^K$.
Note that $d^K(a_n, \tilde{a}_n), d^K(b_n, \tilde{b}_n) \le 2^{-n}$.
We then see that
$$
\begin{aligned}
|d^G_n(a_n, b_n)-d^K(a, b)|
&\le 2\cdot 2^{-n}+|d^G_n(\tilde{a}_n, \tilde{b}_n)-d^K(a, b)| \\
&= 2\cdot 2^{-n}+|d^K(\tilde{a}_n, \tilde{b}_n)-d^K(a, b)| \\
&\le 4\cdot 2^{-n}+d^K(a, a_n)+d^K(b, b_n)
\to 0.
\end{aligned}
$$
This completes the proof.
\end{proof}

\begin{theorem}[Cantor-type approximation of the Sierpi\'{n}ski gasket]
The Cantor-type approximation $(D_n, d^D_n)$ satisfies condition \ref{i:h2}\rm{:}\ for arbitrary sequences $a_n \in D_n \to a \in K$ and $b_n \in D_n \to b \in K$ under the metric $d^E$,
the convergence
$$
d^D_n(a_n, b_n) \to d^K(a, b)
$$
holds.
\end{theorem}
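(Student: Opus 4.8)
The plan is to mimic the proof of the network approximation theorem, replacing the role played there by the $2^{-n}$-net in $V_n$ with the corresponding estimate for the Cantor-type pre-fractal, and inserting one use of Barlow's metric equivalence to pass from $d^E$-convergence to $d^K$-convergence.

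First I would record the elementary geometry of $D_n$. By construction $D_n$ is the union of the $3^n$ closed triangles $F_{i_1}\circ\cdots\circ F_{i_n}(D_0)$ with $(i_1,\dots,i_n)\in\{1,2,3\}^n$, each a similar copy of the unit triangle $D_0$ of ratio $2^{-n}$, hence of Euclidean diameter $2^{-n}$; moreover the three vertices of each such triangle lie in $V_n$. Given $a_n\in D_n$, choose a small triangle $T_n$ of $D_n$ with $a_n\in T_n$ and let $\tilde a_n\in V_n$ be one of its vertices. Since $T_n$ is convex and $T_n\subset D_n$, the line segment from $a_n$ to $\tilde a_n$ is an admissible curve for the intrinsic metric $d^D_n$, so $d^D_n(a_n,\tilde a_n)\le |a_n-\tilde a_n|\le 2^{-n}$; in particular $d^E(a_n,\tilde a_n)\le 2^{-n}$. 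Choose $\tilde b_n\in V_n$ analogously.

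Next I would chain three estimates. By the triangle inequality for $d^D_n$ and the previous step,
$$
|d^D_n(a_n,b_n)-d^D_n(\tilde a_n,\tilde b_n)|\le d^D_n(a_n,\tilde a_n)+d^D_n(b_n,\tilde b_n)\le 2\cdot 2^{-n}.
$$
Since $\tilde a_n,\tilde b_n\in V_n$, the lemma on the restriction of the metrics to $V_n$ gives $d^D_n(\tilde a_n,\tilde b_n)=d^K(\tilde a_n,\tilde b_n)$. Finally, by the triangle inequality for $d^K$ and Barlow's inequality $d^K\le 8d^E$,
$$
|d^K(\tilde a_n,\tilde b_n)-d^K(a,b)|\le d^K(\tilde a_n,a)+d^K(\tilde b_n,b)\le 8d^E(\tilde a_n,a)+8d^E(\tilde b_n,b),
$$
and $d^E(\tilde a_n,a)\le d^E(\tilde a_n,a_n)+d^E(a_n,a)\le 2^{-n}+d^E(a_n,a)\to 0$, and likewise for $b$. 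Combining the three estimates,
$$
|d^D_n(a_n,b_n)-d^K(a,b)|\le 2\cdot 2^{-n}+8\bigl(2^{-n}+d^E(a_n,a)\bigr)+8\bigl(2^{-n}+d^E(b_n,b)\bigr)\longrightarrow 0,
$$
which is the claim.

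The argument is essentially routine; the one place that needs attention — the analog of the main obstacle — is that, unlike the network case, the data $a_n,b_n$ here converge in the ambient Euclidean metric $d^E$ rather than in $d^K$, so one genuinely needs the equivalence $d^E\le d^K\le 8d^E$ (Barlow) to transport the estimate to $d^K$. One also needs the convexity of the sub-triangles of $D_n$ to guarantee that the $d^D_n$-distance from $a_n$ to a nearby vertex is controlled by the sub-triangle's Euclidean diameter $2^{-n}$, rather than by some longer detour forced by the global geometry of $D_n$. Both ingredients are already available in the material above.
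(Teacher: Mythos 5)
Your proof is correct and takes essentially the same route as the paper's: replace $a_n,b_n$ by nearest vertices $\tilde a_n,\tilde b_n\in V_n$, use the restriction lemma to identify $d^D_n$ with $d^K$ on $V_n$, and transport the $d^E$-convergence of the data to $d^K$ via Barlow's inequality $d^K\le 8\,d^E$. Your additional justification that $d^D_n(a_n,\tilde a_n)\le 2^{-n}$ via the convexity of the small sub-triangles is a detail the paper leaves implicit, but it does not change the argument.
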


\begin{proof}
For each $a_n, b_n$ take the nearest point $\tilde{a}_n, \tilde{b}_n$ of $V_n$ in the metric $d^E$.
Note that $d^E(a_n, \tilde{a}_n), d^E(b_n, \tilde{b}_n) \le 2^{-n}$.
We then see that
$$
\begin{aligned}
|d^D_n(a_n, b_n)-d^K(a, b)|
&\le 2\cdot 2^{-n}+|d^G_n(\tilde{a}_n, \tilde{b}_n)-d^K(a, b)| \\
&= 2\cdot 2^{-n}+|d^K(\tilde{a}_n, \tilde{b}_n)-d^K(a, b)| \\
&\le 4\cdot 2^{-n}+8 d^E(a, a_n)+8 d^E(b, b_n)
\to 0.
\end{aligned}
$$
This completes the proof.
\end{proof}

\subsection{On the Koch curve}

We remark that every self-similar fractal and its prefractal do not necessarily satisfy the condition \ref{i:h2}.
For instance, consider the Koch curve, which is generated from the iterated function system on the plane
$$
F_1(x, y) = \left(\frac{1}{3}x, \frac{1}{3}y\right),
\quad F_2(x, y) = \left(\frac{1}{6}x-\frac{\sqrt{3}}{6}y+\frac{1}{3}, \frac{\sqrt{3}}{6}x+\frac{1}{6}y\right),
$$
$$
F_3(x, y) = \left(\frac{1}{6}x+\frac{\sqrt{3}}{6}y+\frac{1}{2}, \frac{\sqrt{3}}{6}x-\frac{1}{6}y+\frac{\sqrt{3}}{6}\right),
\quad F_4(x, y) = \left(\frac{1}{3}x+\frac{2}{3}, \frac{1}{3}y\right),
$$
and its pre-fractal starting from
$$
K_0 = \{ (x, 0) \mid 0 \le x \le 1 \}.
$$
Then, the geodesic distance on the prefractal $K_n$ between $(0, 0)$ and $(1, 0)$ can be calculated as
$$
d_n((0, 0), (1, 0)) = \left(\frac{4}{3}\right)^n,
$$
which diverges as $n \to \infty$.

\section{Example: Junction of shrinking tubes}
\label{s:junc}

In order to define a shrinking tube on $\mathbf{R}^2$,
set
$$
e_1 = (1, 0),
\quad e_2 = (-\frac{1}{2}, \frac{\sqrt{3}}{2}),
\quad e_3 = (-\frac{1}{2}, -\frac{\sqrt{3}}{2}),
$$
$$
y_1 = (0, 1),
\quad y_2 = (-\frac{\sqrt{3}}{2}, -\frac{1}{2}),
\quad y_3 = (\frac{\sqrt{3}}{2}, -\frac{1}{2}).
$$
For $i = 1, 2, 3$ define tube parts $R_i$ by
$$
R_i = \{ x = s e_i+t y_i \in \mathbb{R}^2 \mid s \ge \frac{\sqrt{3}}{3n}, -\frac{1}{n} \le t \le \frac{1}{n} \}.
$$
Set
$$
u_1 = -\frac{2\sqrt{3}}{3n}e_3,
\quad u_2 = -\frac{2\sqrt{3}}{3n}e_1,
\quad u_3 = -\frac{2\sqrt{3}}{3n}e_2,
\quad u_4 = u_1,
$$
and define center parts $S_i$ by
$$
S_i = \{ x = s u_i+t u_{i+1} \mid 0 \le s,t \le 1, s+t = 1 \}.
$$
Using three parts $T_i := R_i\cup S_i$, define a shrinking tube $T_n$ by
$$
T_n = T_1\cup T_2\cup T_3.
$$
The shrinking tube $T_{n}$ converges to the Y-junction
$$
J = [0, \infty)e_1 \cup [0, \infty)e_2 \cup [0, \infty)e_3
$$
in the Hausdorff sense where $[0, \infty)e_i = \{ s e_i \mid s \ge 0 \}$.
We are now able to prove that the shrinking tube satisfies condition \ref{i:h2} via projection-like map.

\begin{theorem}
The shrinking tube $(T_n, d^T_n)$ satisfies the condition \ref{i:h2}\rm{:} for arbitrary sequences $a_n \in T_n \to a \in J$ and $b_n \in T_n \to b \in J$ under the Euclidean metric $d^E$,
the convergence
$$
d^T_n(a_n, b_n) \to d^J(a, b)
$$
holds.
\end{theorem}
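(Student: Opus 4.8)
The plan is to construct a \emph{projection-like} map $\pi_n\colon T_n\to J$ which is $O(1/n)$-close to the identity in the Euclidean metric $d^E$ and which is an almost-isometry between $(T_n,d^T_n)$ and $(J,d^J)$, and then to pass to the limit along the given sequences. The whole difficulty is contained in the comparison $|d^T_n(x,y)-d^J(\pi_n(x),\pi_n(y))|\le C/n$ for all $x,y\in T_n$; once this is known, the convergence $d^E$-convergence of $a_n,b_n$ transfers to $\pi_n(a_n),\pi_n(b_n)$ and one concludes by continuity of $d^J$.

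First I would collapse each tube part onto the corresponding leg: for $x=se_i+ty_i\in R_i$ set $\pi_n(x)=(s-\tfrac{\sqrt 3}{3n})e_i\in[0,\infty)e_i\subset J$, and set $\pi_n\equiv 0$ on the center part $S_1\cup S_2\cup S_3$. A direct computation with the given coordinates shows that the base edge $\{s=\tfrac{\sqrt 3}{3n}\}$ of each $R_i$ coincides with one of the segments $S_j$ and that the two definitions of $\pi_n$ agree there (both equal $0$), so $\pi_n$ is well defined and continuous, and clearly $\sup_{x\in T_n}d^E(x,\pi_n(x))\le C/n$, since each point of $R_i$ lies within the half-width $1/n$ of its spine and the center part has diameter $O(1/n)$. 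On each convex strip $R_i$ the map $\pi_n$ is orthogonal projection onto the line $\mathbb R e_i$ composed with a translation, hence $1$-Lipschitz into $(J,d^J)$ because $[0,\infty)e_i$ sits isometrically in $J$; on each $S_i$ it is constant. Splitting a rectifiable curve $\gamma$ in $T_n$ into pieces lying in a single $R_i$ or $S_i$ therefore gives $\mathrm{length}_{d^J}(\pi_n\circ\gamma)\le\mathrm{length}_{d^E}(\gamma)$, and taking the infimum over curves joining $x$ to $y$ yields $d^J(\pi_n(x),\pi_n(y))\le d^T_n(x,y)$.

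For the reverse estimate I would lift the unique $J$-geodesic from $\pi_n(x)$ to $\pi_n(y)$ (which either stays on one leg or passes through the vertex $0$) to a competitor path in $T_n$: move from $x$ to the spine $\{se_i:s\ge\tfrac{\sqrt 3}{3n}\}\subset R_i$ of its tube (cost $\le 1/n$), run along the spine, pass between adjacent spines through the triangular center part (whose perimeter is $O(1/n)$), and finally reach $y$ from the spine of its tube (cost $\le 1/n$). Since the spine is an isometric copy of the leg up to the offset $\tfrac{\sqrt 3}{3n}$, this path has length at most $d^J(\pi_n(x),\pi_n(y))+C/n$, so $d^T_n(x,y)\le d^J(\pi_n(x),\pi_n(y))+C/n$ for all $x,y\in T_n$.

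Combining the two inequalities, $|d^T_n(a_n,b_n)-d^J(\pi_n(a_n),\pi_n(b_n))|\le C/n\to 0$. On the other hand $d^E(\pi_n(a_n),a)\le d^E(\pi_n(a_n),a_n)+d^E(a_n,a)\to 0$, and the elementary bound $d^E\le d^J\le\tfrac{2}{\sqrt 3}d^E$ on the junction $J$, which follows from $|se_i-te_j|^2=(s+t)^2-st\ge\tfrac34(s+t)^2$ when $i\ne j$, then gives $d^J(\pi_n(a_n),a)\to 0$ and likewise $d^J(\pi_n(b_n),b)\to 0$; hence $d^J(\pi_n(a_n),\pi_n(b_n))\to d^J(a,b)$ by the triangle inequality, and therefore $d^T_n(a_n,b_n)\to d^J(a,b)$, which is \ref{i:h2}. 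I expect the main obstacle to be the lifting step: one must check from the explicit formulas for $R_i$, $S_i$ and the points $u_i$ that the spines of adjacent tubes are genuinely joined inside $T_n$ by a path of length $O(1/n)$, and that a $d^T_n$-geodesic cannot enter and leave the center region so often that the accumulated $O(1/n)$ errors cease to be $O(1/n)$ — both settled by noting that a shortest path in $T_n$ visits the center region only boundedly many times.
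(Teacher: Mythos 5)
Your proof is correct and follows essentially the same route as the paper: both construct a projection-like map from $T_n$ onto $J$ that displaces points by $O(1/n)$ and then compare $d^T_n$ with $d^J$ via the triangle inequality. Your version supplies details the paper leaves as "easily checked" — notably the two-sided almost-isometry estimate via curve lengths and the comparison $d^E \le d^J \le \tfrac{2}{\sqrt 3}\,d^E$ on $J$ — but the underlying idea is identical.
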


\begin{proof}
Take joint parts $P_i$ of $T_i$ which is given by
$$
P_i = \{ s u_i \mid 0 < s \le 1 \}.
$$
We construct a projection-like map $F_n : T_n \to J$ as follows.
$$
F_n(x) = (x, e_i)e_i \quad \text{if $x \in T_i\setminus P_i$}.
$$
By the triangle inequality and the property of $F_n$, we see that
\begin{align*}
&|d^T_n(a_n, b_n)-d^J(a, b)| \\
&\quad \le d^T_n(F_n(a_n), a_n)+d^T_n(F_n(b_n), b_n)+|d^T_n(F_n(a_n), F_n(b_n))-d^J(a, b)| \\
&\quad \le 2 n^{-1}+|d^T_n(F_n(a_n), F_n(b_n))-d^J(a, b)|.
\end{align*}
We can evaluate the last term as
\begin{align*}
&|d^T_n(F_n(a_n), F_n(b_n))-d^J(a, b)| \\
&\quad \le d^T_n(F_n(b_n), b)+d^T_n(F_n(a_n), a)+|d^T_n(a, b)-d^J(a, b)| \\
&\quad = d^J(F_n(b_n), b)+d^J(F_n(a_n), a)+|d^T_n(a, b)-d^J(a, b)|.
\end{align*}
Since one can easily check $d^J(F_n(a_n), a) \to 0$, $d^J(F_n(b_n), b) \to 0$, and $|d^T_n(a, b)-d^J(a, b)| \to 0$ as $n \to \infty$,
the proof is now completed.
\end{proof}

\begin{remark}
The situation we considered is a simple case and it must be generalized.
For example, $n^{-1}$-neighborhood
$$
X_n = \{ x \in \mathsf{X} \mid \mathsf{d}(X_\infty, x) \le n^{-1} \}
$$
of a given closed set $X_\infty$ should satisfy condition \ref{i:h2} as long as $X_\infty$ is compact.
However, this generalization is left as an open problem.
\end{remark}

\section{Example: Lattice lines and the Manhattan distance}
\label{s:lattice}

Consider the lattice lines with shrinking spacing $n^{-1}$ for $n = 1, 2, 3, \cdots$ in the plane:
$$
L_n = \left(\bigcup_{i \in \mathbb{Z}}\{ n^{-1}i \}\times\mathbb{R}\right) \cup \left(\bigcup_{j \in \mathbb{Z}}\mathbb{R}\times\{ n^{-1}j \}\right)
$$
and the intrinsic metric $d^L_n$ from the Euclidean metric $d^E$.
$L_n$ converges to the plane $\mathbb{R}^2$ as $n \to \infty$ in the Hausdorff sense
but its metric structure should not be the usual Euclidean one $d^E$.
We instead introduce the so-called Manhattan distance
$$
d^M((a, b), (x, y)) = |x-a|+|y-b| \quad \text{for $(a, b), (x, y) \in \mathbb{R}^2$.}
$$
$(L_n, d^L_n)$ is embedded in $(\mathbb{R}^2, d^M)$ preserving distance.

\begin{theorem}[Lattice lines]
The lattice lines $(L_n, d^L_n)$ satisfies condition \ref{i:h2}\rm{:} for arbitrary sequences $a_n \in L_n \to a \in \mathbb{R}^2$ and $b_n \in L_n \to b \in \mathbb{R}^2$ under the metric $d^M$,
the convergence
$$
d^L_n(a_n, b_n) \to d^M(a, b)
$$
holds.
\end{theorem}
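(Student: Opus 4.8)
The plan is to reduce the claim to the case where $a_n$ and $b_n$ are \emph{vertices} of the lattice (points with both coordinates in $n^{-1}\mathbb{Z}$), for which $d^L_n$ agrees \emph{exactly} with $d^M$, and then to control the passage to general points by an $O(n^{-1})$ error, exactly as in the self-similar and junction examples above. The first and main step is the lower bound $d^L_n(p, q) \ge d^M(p, q)$ for all $p, q \in L_n$. Since $d^L_n$ is the infimum of lengths of curves in $L_n$, it suffices to show that every Lipschitz curve $\gamma = (\gamma_1, \gamma_2) \colon [0, 1] \to L_n$ has length at least $|\gamma_1(1) - \gamma_1(0)| + |\gamma_2(1) - \gamma_2(0)|$. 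The key point is that for a.e. $t$ one of $\gamma_1'(t), \gamma_2'(t)$ vanishes: if $\gamma(t)$ is not a vertex, then a whole neighbourhood of $\gamma(t)$ in $L_n$ is a sub-segment of a single horizontal or vertical line, so one coordinate of $\gamma$ is locally constant there; and at times $t$ with $\gamma(t)$ a vertex, the set $\gamma^{-1}(\{\text{vertices}\})$ is closed and, since two distinct vertices are at distance $\ge n^{-1}$ while $\gamma$ is Lipschitz, $\gamma$ is constant near $t$ along that set, forcing $\gamma'(t) = 0$ at its density points. Hence $|\gamma'(t)| = |\gamma_1'(t)| + |\gamma_2'(t)|$ a.e., and integrating yields the bound.

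Next I would record the vertex identity: if $u$ and $v$ are vertices of $L_n$, the path running horizontally from $u$ to $(v_1, u_2)$ and then vertically to $v$ lies entirely in $L_n$ and has length $|u_1 - v_1| + |u_2 - v_2| = d^M(u, v)$, so together with the lower bound this gives $d^L_n(u, v) = d^M(u, v)$. I would also note that any $p \in L_n$ lies on some horizontal or vertical lattice line, and moving along that line to the nearest vertex $\tilde p$ costs at most $(2n)^{-1}$; in particular $d^L_n(p, \tilde p) \le (2n)^{-1}$, and hence $d^M(p, \tilde p) \le (2n)^{-1}$ by the lower bound.

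The conclusion then follows by the usual triangle-inequality bookkeeping. Given $a_n \to a$ and $b_n \to b$ in $d^M$, choose vertices $\tilde a_n, \tilde b_n$ as above; then
\[
|d^L_n(a_n, b_n) - d^M(a, b)| \le d^L_n(a_n, \tilde a_n) + d^L_n(b_n, \tilde b_n) + |d^M(\tilde a_n, \tilde b_n) - d^M(a, b)|,
\]
and the last term is at most $d^M(\tilde a_n, a) + d^M(\tilde b_n, b) \le (2n)^{-1} + d^M(a_n, a) + (2n)^{-1} + d^M(b_n, b)$. Every term on the right-hand side tends to $0$ as $n \to \infty$, which proves the theorem.

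The genuinely non-trivial ingredient is the lower bound $d^L_n \ge d^M$: the automatic estimate $d^L_n \ge d^E$ is too weak, since $d^E \le d^M$, so one must exploit that curves in $L_n$ are forced to move one coordinate at a time, the only slightly delicate point being the analysis of the times a curve spends sitting exactly at a vertex. Everything after that is routine.
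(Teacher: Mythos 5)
Your proof is correct, but it takes a genuinely different --- and in fact more careful --- route than the paper. The paper's own proof is a one-liner resting on the claim that $d^L_n = d^M$ on $L_n$, i.e.\ that $(L_n, d^L_n)$ embeds isometrically into $(\mathbb{R}^2, d^M)$, after which the same triangle-inequality bookkeeping you use at the end finishes the argument. That identity, however, fails for non-vertex points: for $p = ((2n)^{-1}, 0)$ and $q = ((2n)^{-1}, n^{-1})$, both in $L_n$, one has $d^M(p, q) = n^{-1}$, while any curve in $L_n$ joining them must reach a vertical lattice line and come back, so $d^L_n(p, q) = 2n^{-1}$. Your argument supplies exactly what is needed to repair this: the one-sided bound $d^L_n \ge d^M$ on all of $L_n$ (via the observation that a Lipschitz curve in $L_n$ has at least one vanishing coordinate derivative at a.e.\ time, with the vertex set handled by the density-point argument, which is sound), the exact identity $d^L_n = d^M$ on vertices realized by the L-shaped path, and an $O(n^{-1})$ correction from snapping general points to nearest vertices --- the same two-step scheme the paper itself uses for the Sierpi\'{n}ski gasket approximations. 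Your approach buys a proof valid for arbitrary points of $L_n$; the paper's shortcut would buy brevity only if the isometry claim held, which it does not. The one step I would write out fully is the measure-theoretic bookkeeping in the lower bound (splitting $[0,1]$ into the open set where $\gamma(t)$ is not a vertex and the closed preimage of the vertex set, and citing the fact that a Lipschitz map has vanishing derivative at a.e.\ density point of a set on which it is locally constant), but the idea as stated is complete.
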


\begin{proof}
Since $d^L_n = d^M$,
we see that
$$
|d^L_n(a_n, b_n)-d^M(a, b)|
= |d^M(a_n, b_n)-d^M(a, b)|
\le d^M(a, a_n)+d^M(b, b_n)
\to 0.
$$
This completes the proof.
\end{proof}

\section{Remarks on time-dependent case}
\label{s:time}

Our stability theory can be applied to the time-dependent equation immediately.
Consider
\begin{equation}
\label{e:thj}
\partial_t u^n(t, x)+H_n(t, x, u^n(t, x), |\nabla_{d_n} u^n|(t, x)) = 0 \quad \text{for $t > 0$ and $x \in X_n$,}
\end{equation}
and
\begin{equation}
\label{e:tlhj}
\partial_t u^\infty(t, x)+H_\infty(t, x, u^\infty(t, x), |\nabla_{d_\infty} u^\infty|(t, x)) = 0 \quad \text{for $t > 0$ and $x \in X_\infty$,}
\end{equation}
where $\partial_t u$ denotes the derivative of the unknown function $u$ in the time variable $t$.
% For simplicity, we will use the notation of the timespace $z = (t, x)$ and $Q_n = (0, \infty)\times(X_n, d_n)$.
Since the time-dependence is to be added to $H_n$,
condition \ref{i:h3} should be modified as follows:
\begin{enumerate}
\renewcommand{\labelenumi}{(H3')}
\renewcommand{\theenumi}{(H3')}
\item
\label{i:h3t}
For arbitrary sequences $t_n \to t_\infty$ in $(0, \infty)$, $x_n \in X_n \to x_\infty \in X_\infty$ under the metric $\mathsf{d}$, $u_n \to u_\infty$ on $\mathbb{R}$ and $p_n \to p_\infty$ on $\mathbb{R}_+$,
the convergence
$$
H_n(t_n, x_n, u_n, p_n) \to H_\infty(t_\infty, x_\infty, u_\infty, p_\infty)
$$
holds.
\end{enumerate}

Let us review the definition of metric viscosity solutions to the time-dependent equation
\begin{equation}
\label{e:tfhj}
\partial_t u(t, x)+H(t, x, u(t, x), |\nabla_d u|(t, x)) = 0 \quad \text{for $t > 0$ and $x \in X$.}
\end{equation}
We call a locally Lipschitz function $\psi(t, x) = \psi_1(t, x)+\psi_2(t, x)$ is a \emph{test function for subsolutions}
if $\psi_1$ and $\psi_2$ are locally Lipschitz and $C^1$ in the time variable $t$ and $\psi_1$ is a $C^{1, -}$ function in the spatial variable $x$.
A locally Lipschitz function $\psi(t, x) = \psi_1(t, x)+\psi_2(t, x)$ is a \emph{test function for supersolutions}
if $\psi_1$ and $\psi_2$ are locally Lipschitz and $C^1$ in the time variable $t$ and $\psi_1$ is a $C^{1, +}$ function in the spatial variable $x$.

\begin{definition}[Metric viscosity solutions to time-dependent problem]
Let $u: (0, \infty)\times X \to \mathbb{R}$.
\begin{itemize}
\item
$u$ is a \emph{metric viscosity subsolution} of \eqref{e:tfhj}
if for every test function $\psi = \psi_1+\psi_2$ for subsolutions such that $u^*-\psi$ attains a local maximum at $\hat{z} = (\hat{t}, \hat{x}) \in (0, \infty)\times X$
we have
$$
\partial_t \psi(\hat{z})+H_{|\nabla_d\psi_2|^*(\hat{z})}(\hat{z}, u^*(\hat{z}), |\nabla_d\psi_1|(\hat{z})) \le 0.
$$
\item
$u$ is a \emph{metric viscosity subsolution} of \eqref{e:tfhj}
if for every test function $\psi = \psi_1+\psi_2$ for supersolutions such that $u_*-\psi$ attains a local minimum at $\hat{z} = (\hat{t}, \hat{x}) \in (0, \infty)\times X$
we have
$$
\partial_t \psi(\hat{z})+H^{|\nabla_d\psi_2|^*(\hat{z})}(\hat{z}, u_*(\hat{z}), |\nabla_d\psi_1|(\hat{z})) \ge 0.
$$
\item
$u$ is a \emph{metric viscosity solution} of \eqref{e:fhj}
if $u$ is both a metric viscosity subsolution and a metric viscosity supersolution.
\end{itemize}
\end{definition}

Time-dependent version of our key proposition (Proposition \ref{t:sdtest}) can be stated as follows.

\begin{proposition}
\label{t:tsdtest}
Let $(X, d)$ be a locally compact geodesic space.
Then, the following conditions are equivalent.
\begin{itemize}
\item[(i)]
A function $u: (0, \infty)\times X \to \mathbb{R}$ is a metric viscosity subsolution of \eqref{e:tfhj}.
\item[(ii)]
For all $\hat{a}, \hat{x} \in X$, $k \ge 0$ and a $C^1$ function $\phi$,
if $u^*(t, x)-\phi(t)-\frac{k}{2}d(\hat{a}, x)^2$ attains a local maximum at $(t, x) = (\hat{t}, \hat{x})$,
then the inequality
$$
\partial_t \phi(\hat{t})+H(\hat{t}, \hat{x}, u^*(\hat{t}, \hat{x}), k d(\hat{a}, \hat{x})) \le 0
$$
holds.
\item[(iii)]
For all $\hat{a}, \hat{x} \in X$, $k \ge 0$ and a $C^1$ function $\phi$,
if $u^*(t, x)-\phi(t)-\frac{k}{2}d(\hat{a}, x)^2$ attains a strict local maximum at $(t, x) = (\hat{t}, \hat{x})$,
then the inequality
$$
\partial_t \phi(\hat{t})+H(\hat{t}, \hat{x}, u^*(\hat{t}, \hat{x}), k d(\hat{a}, \hat{x})) \le 0
$$
holds.
\end{itemize}
Similarly, the following conditions are equivalent.
\begin{itemize}
\item[(i)]
A function $u: (0, \infty)\times X \to \mathbb{R}$ is a metric viscosity supersolution of \eqref{e:tfhj}.
\item[(ii)]
For all $\hat{a}, \hat{x} \in X$, $k \ge 0$ and a $C^1$ function $\phi$,
if $u_*(t, x)-\phi(t)+\frac{k}{2}d(\hat{a}, x)^2$ attains a local minimum at $(t, x) = (\hat{t}, \hat{x})$,
then the inequality
$$
\partial_t \phi(\hat{t})+H(\hat{t}, \hat{x}, u_*(\hat{t}, \hat{x}), k d(\hat{a}, \hat{x})) \ge 0
$$
holds.
\item[(iii)]
For all $\hat{a}, \hat{x} \in X$, $k \ge 0$ and a $C^1$ function $\phi$,
if $u_*(t, x)-\phi(t)+\frac{k}{2}d(\hat{a}, x)^2$ attains a strict local minimum at $(t, x) = (\hat{t}, \hat{x})$,
then the inequality
$$
\partial_t \phi(\hat{t})+H(\hat{t}, \hat{x}, u_*(\hat{t}, \hat{x}), k d(\hat{a}, \hat{x})) \ge 0
$$
holds.
\end{itemize}
\end{proposition}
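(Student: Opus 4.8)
The plan is to follow the proof of Proposition~\ref{t:sdtest} step by step, simply carrying the extra $C^1$ time variable through every estimate; the one genuinely new point is that, because the time variable is never doubled, a small time penalization has to be inserted in order to localize the perturbed maximum point.

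The easy implications come first. A function $\phi(t)+\frac{k}{2}d(\hat a,x)^2$ with $\phi\in C^1$, $k\ge 0$ is a test function for subsolutions (its $C^{1,-}$-in-$x$ part is $\frac{k}{2}d(\hat a,\cdot)^2$ by the squared-distance lemma, and each summand is $C^1$ in $t$), with spatial slope $kd(\hat a,\hat x)$ at $\hat x$ and with the $\psi_2$-part $\phi(t)$ having zero spatial slope; so (i)~$\Rightarrow$~(ii) is immediate, and (ii)~$\Rightarrow$~(iii) is trivial since a strict local maximum is a local maximum. For (iii)~$\Rightarrow$~(ii), if $u^*(t,x)-\phi(t)-\frac{k}{2}d(\hat a,x)^2$ has a local maximum at $(\hat t,\hat x)$, I would perturb to $u^*(t,x)-\phi(t)-\frac{\eta}{2}(t-\hat t)^2-\frac{k+\eta}{2}d(\hat a,x)^2$ for $\eta>0$, which has a strict local maximum at $(\hat t,\hat x)$, apply~(iii) (observing that $\frac{d}{dt}\bigl[\frac{\eta}{2}(t-\hat t)^2\bigr]$ vanishes at $\hat t$), and send $\eta\to0$ using continuity of $H$ in the slope slot.

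The core is (ii)~$\Rightarrow$~(i). Fix a test function $\psi=\psi_1+\psi_2$ for subsolutions with $u^*-\psi$ having a local maximum at $\hat z=(\hat t,\hat x)$, and for $\e,\alpha>0$ consider, on a small compact neighborhood of $(\hat t,\hat x,\hat x)$, the function
$$
(t,x,y)\mapsto u^*(t,x)-\psi(t,y)-\frac{1}{2\e}d(x,y)^2-\frac{\alpha}{2}d(\hat x,y)^2-\frac{\alpha}{2}(t-\hat t)^2 .
$$
By local compactness it has a maximum point $(t_\e,x_\e,y_\e)$; comparing its value with the value at $(\hat t,\hat x,\hat x)$ and using the local Lipschitz constant $L$ of $\psi$, one gets
$$
\frac{1}{2\e}d(x_\e,y_\e)^2+\frac{\alpha}{2}d(\hat x,y_\e)^2+\frac{\alpha}{2}(t_\e-\hat t)^2\le L\,d(x_\e,y_\e),
$$
so $d(x_\e,y_\e)\to0$, $d(\hat x,y_\e)\to0$, $t_\e\to\hat t$, hence $(t_\e,x_\e,y_\e)\to(\hat t,\hat x,\hat x)$ and $u^*(t_\e,x_\e)\to u^*(\hat t,\hat x)$, and for $\e$ small the point is interior, hence a local maximum. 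Freezing $y=y_\e$, the map $(t,x)\mapsto u^*(t,x)-\bigl[\psi(t,y_\e)+\frac{\alpha}{2}(t-\hat t)^2\bigr]-\frac{1}{2\e}d(y_\e,x)^2$ has a local maximum at $(t_\e,x_\e)$, so applying hypothesis~(ii) with $\hat a=y_\e$, $k=1/\e$ and that $C^1$ function of $t$ yields
$$
\partial_t\psi(t_\e,y_\e)+\alpha(t_\e-\hat t)+H\bigl(t_\e,x_\e,u^*(t_\e,x_\e),\tfrac{1}{\e}d(x_\e,y_\e)\bigr)\le 0 .
$$
Freezing instead $(t,x)=(t_\e,x_\e)$, the map $y\mapsto -\psi_1(t_\e,y)-\psi_2(t_\e,y)-\frac{1}{2\e}d(x_\e,y)^2-\frac{\alpha}{2}d(\hat x,y)^2$ has a local maximum at $y_\e$; since $-\psi_1(t_\e,\cdot)\in C^{1,+}$ and $\frac{1}{2\e}d(x_\e,\cdot)^2\in C^{1,-}$, the third bullet of Lemma~\ref{t:mp} together with subadditivity of the slope gives
$$
\bigl|\,|\nabla_d\psi_1(t_\e,\cdot)|(y_\e)-\tfrac{1}{\e}d(x_\e,y_\e)\,\bigr|\le |\nabla_d\psi_2(t_\e,\cdot)|(y_\e)+\alpha\,d(\hat x,y_\e)=:a_\e .
$$
Thus $\frac{1}{\e}d(x_\e,y_\e)$ lies within $a_\e$ of $|\nabla_d\psi_1(t_\e,\cdot)|(y_\e)$, so by the definition of $H_a$ the inequality above upgrades to
$$
\partial_t\psi(t_\e,y_\e)+\alpha(t_\e-\hat t)+H_{a_\e}\bigl(t_\e,x_\e,u^*(t_\e,x_\e),|\nabla_d\psi_1(t_\e,\cdot)|(y_\e)\bigr)\le 0 ,
$$
and letting $\e\to0$ — using continuity of $H$, continuity of $\partial_t\psi$ and of $|\nabla_d\psi_1|$ in $(t,x)$, the convergence $\alpha\,d(\hat x,y_\e)\to0$, and upper semicontinuity of $|\nabla_d\psi_2|$ so that $\limsup_\e a_\e\le|\nabla_d\psi_2|^*(\hat z)$ — produces $\partial_t\psi(\hat z)+H_{|\nabla_d\psi_2|^*(\hat z)}\bigl(\hat z,u^*(\hat z),|\nabla_d\psi_1|(\hat z)\bigr)\le0$, i.e.\ $u$ is a subsolution. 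The supersolution equivalences follow by the mirror-image argument.

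The main obstacle is the last limit in the term $H_{a_\e}(\cdots)$, where the monotonicity of $a\mapsto H_a$ must be reconciled with only knowing $\limsup_\e a_\e\le|\nabla_d\psi_2|^*(\hat z)$: exactly as in Proposition~\ref{t:sdtest}, one fixes $\delta>0$, uses $H_{a_\e}\ge H_{|\nabla_d\psi_2|^*(\hat z)+\delta}$ for $\e$ small, notes that $(t,x,u,p)\mapsto H_b(t,x,u,p)$ is continuous for fixed $b$ (a parametric minimum of the continuous $H$ over a continuously varying compact interval of slopes), passes to the limit in $\e$, and finally sends $\delta\to0$ using $\lim_{\delta\to0}H_{b+\delta}=H_b$. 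The genuinely new ingredient compared with the stationary proof is the time localization: since $t$ is not doubled, it is the penalty $\frac{\alpha}{2}(t-\hat t)^2$ that pins $t_\e$ to $\hat t$, and one must verify (as done above) that its contribution $\alpha(t_\e-\hat t)$ to the differential inequality vanishes in the limit. One should also record that ``$C^{1,-}$ in the spatial variable'' for $\psi_1$ is meant to include joint continuity of $(t,x)\mapsto|\nabla_d\psi_1|(t,x)$, which is what legitimizes $|\nabla_d\psi_1(t_\e,\cdot)|(y_\e)\to|\nabla_d\psi_1|(\hat z)$.
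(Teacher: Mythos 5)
Your proof is correct and follows the paper's strategy in all essentials: double the spatial variable with the penalization $\frac{1}{2\e}d(x,y)^2+\frac{\alpha}{2}d(\hat x,y)^2$, extract a maximum point by local compactness, apply the squared-distance hypothesis (ii) in the $x$-slot with $\hat a=y_\e$ and $k=1/\e$, and control the slope discrepancy through the third bullet of Lemma \ref{t:mp} before passing to the limit in $H_{a_\e}$. The one place where you genuinely diverge is the treatment of time: the paper doubles $t$ as well, adding $-\frac{1}{2\e}|t-s|^2$ and recovering the time derivative from the interior first-order condition $\partial_t\psi(s_\e,y_\e)-\frac{1}{\e}(t_\e-s_\e)=0$, whereas you keep a single time variable, absorb $t\mapsto\psi(t,y_\e)+\frac{\alpha}{2}(t-\hat t)^2$ into the arbitrary $C^1$ function $\phi$ that condition (ii) already permits, and let the extra contribution $\alpha(t_\e-\hat t)$ vanish in the limit. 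Both variants work; yours is slightly more economical precisely because (ii) allows general $C^1$ time dependence, while the paper's version treats $t$ and $x$ symmetrically and avoids having to argue separately that the time penalty pins $t_\e$ to $\hat t$ (which you do correctly via the chain of inequalities). Your (i)$\Rightarrow$(ii) and (iii)$\Rightarrow$(ii) steps coincide with the paper's, including the same perturbation $k\mapsto k+\eta$ used there to pass from a local to a strict local maximum.
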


The proof is basically the same as in the time-independent case.

\begin{proof}
We only show the assertion for subsolutions and show condition (ii) implies condition (i).
Fix a test function $\psi = \psi_1+\psi_2$ for subsolutions such that $u^*-\psi$ attains a local maximum at $\hat{z} = (\hat{t}, \hat{x}) \in (0, \infty)\times X$.
Consider
$$
(t, x, s, y) \mapsto u^*(t, x)-\psi(s, y)-\frac{1}{2\e}|t-s|^2-\frac{1}{2\e}d(x, y)^2-\frac{\alpha}{2}d(\hat{x}, y)^2
$$
with positive $\e$, $\alpha$.
Since $X$ is locally compact we can take a local maximum point $(t_\e, x_\e, s_\e, y_\e)$.
Then,
$$
\begin{aligned}
&u^*(t_\e, x_\e)-\psi(s_\e, y_\e)-\frac{1}{2\e}|t_\e-s_\e|^2-\frac{1}{2\e}d(x_\e, y_\e)^2-\frac{\alpha}{2}d(\hat{x}, y_\e)^2 \\
&\quad \ge u^*(\hat{x})-\psi(\hat{x})
\ge u^*(t_\e, x_\e)-\psi(t_\e, x_\e)
\end{aligned}
$$
and hence
$$
\frac{1}{2\e}|t_\e-s_\e|^2+\frac{1}{2\e}d(x_\e, y_\e)^2+\frac{\alpha}{2}d(\hat{x}, y_\e)^2 \le L |t_\e-s_\e|+L d(x_\e, y_\e)
$$
for the (local) Lipschitz constant $L$ of $\psi$.
Therefore, we can see that $(t_\e, x_\e, s_\e, y_\e) \to (\hat{t}, \hat{x}, \hat{t}, \hat{x})$ as $\e \to 0$.
Now, by the assumption of $u$ we have
$$
\frac{1}{\e}(t_\e-s_\e)+H(t_\e, x_\e, u^*(t_\e, x_\e), \frac{1}{\e}d(x_\e, y_\e)) \le 0.
$$
By Lemma \ref{t:mp} we obtain
$$
||\nabla_d\psi_1|(s_\e, y_\e)-\frac{1}{\e}d(x_\e, y_\e)| \le |\nabla_d\psi_2|(s_\e, y_\e)+\alpha d(\hat{x}, y_\e).
$$
Also note that
$$
\partial_t\psi(s_\e, y_\e)-\frac{1}{\e}(t_\e-s_\e) = 0.
$$
Combining them,
$$
\partial_t\psi(s_\e, y_\e)+H_{|\nabla_d\psi_2|(s_\e, y_\e)+\alpha d(\hat{x}, y_\e)}(t_\e, x_\e, u^*(t_\e, x_\e), |\nabla_d\psi_1|(s_\e, y_\e)) \le 0
$$
and therefore
$$
\partial_t\psi(\hat{t}, \hat{x})+H_{|\nabla_d\psi_2|^*(\hat{t}, \hat{x})}(\hat{t}, \hat{x}, u^*(\hat{t}, \hat{x}), |\nabla_d\psi_1|(\hat{t}, \hat{x})) \le 0.
$$
Hence, $u$ is a viscosity subsolution.
\end{proof}

This implies the following:

\begin{theorem}[Stability to time-dependent problem] Let $(\mathsf{X},\mathsf{d})$ be proper metric space.
Assume \ref{i:h1}, \ref{i:h2}, and \ref{i:h3t}.
Let $u^n$ be a metric viscosity subsolution of \eqref{e:thj} for $n \in \mathbb{N}$.
If the upper semilimit
$$
\overline{u}^\infty(t, x)
:= \limsup_{n \to \infty, t_n \to t, x_n \in X_n \to x \in X_\infty}u^n(t_n, x_n)
= \sup_{t_n \to t, (x_n) \in S(x)}\limsup_{n \to \infty}u^n(t_n, x_n)
$$
is upper semicontinuous in $d_\infty$,
then $\overline{u}^\infty$ is a metric viscosity subsolution of \eqref{e:tlhj}.
Similarly, let $u^n$ is a metric viscosity supersolution of \eqref{e:thj}.
If the lower semilimit
$$
\underline{u}^\infty(t, x)
:= \liminf_{n \to \infty, t_n \to t, x_n \in X_n \to x \in X_\infty}u^n(t_n, x_n)
= \inf_{t_n \to t, (x_n) \in S(x)}\liminf_{n \to \infty}u^n(t_n, x_n)
$$
is lower semicontinuous in $d_\infty$,
then $\underline{u}^\infty$ is a metric viscosity supersolution of \eqref{e:tlhj}.
\end{theorem}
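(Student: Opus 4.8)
The plan is to transcribe, almost verbatim, the proof of Theorem~\ref{Stability}, replacing the static characterization of metric viscosity subsolutions (Proposition~\ref{t:sdtest}) by its time-dependent counterpart, Proposition~\ref{t:tsdtest}. I would treat only the subsolution statement, the supersolution one following by the symmetric change (lower semicontinuous envelopes, minima in place of maxima, reversed inequalities). First I would record that $\overline{u}^\infty$ is upper semicontinuous on $(0,\infty)\times X_\infty$ — a general property of relaxed half-limits — where \ref{i:h1} is used to see that a limit of points $x_n\in X_n$ necessarily lies in $X_\infty$, since $\mathsf d(X_\infty,x_n)\le\mathsf d_H(X_\infty,X_n)\to0$ and $X_\infty$ is compact. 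By Proposition~\ref{t:tsdtest} it then suffices to verify condition (iii) there: given $\hat a,\hat x\in X_\infty$, $k\ge0$ and a $C^1$ function $\phi$ such that $(t,x)\mapsto\overline{u}^\infty(t,x)-\phi(t)-\tfrac k2 d_\infty(\hat a,x)^2$ attains a strict local maximum at $(\hat t,\hat x)$, I must derive $\partial_t\phi(\hat t)+H_\infty(\hat t,\hat x,\overline{u}^\infty(\hat t,\hat x),k\,d_\infty(\hat a,\hat x))\le0$.

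Next I would build the approximating data exactly as in Theorem~\ref{Stability}: by \ref{i:h1} pick $a_n\in X_n$ with $a_n\to\hat a$ in $\mathsf d$, and by definition of the upper semilimit pick $\tilde t_n\to\hat t$ and $\tilde x_n\in X_n\to\hat x$ with $u^n(\tilde t_n,\tilde x_n)\to\overline{u}^\infty(\hat t,\hat x)$. For each $n$ I would maximize the upper semicontinuous function $g_n(t,x):=u^n(t,x)-\phi(t)-\tfrac k2 d_n(a_n,x)^2$ over a suitable compact set (see below), obtaining a maximizer $(t_n,x_n)$, and then run the standard argument: comparing $g_n(\tilde t_n,\tilde x_n)\le g_n(t_n,x_n)$, sending the left side to $\overline{u}^\infty(\hat t,\hat x)-\phi(\hat t)-\tfrac k2 d_\infty(\hat a,\hat x)^2$ (here \ref{i:h2} is what makes $d_n(a_n,\tilde x_n)\to d_\infty(\hat a,\hat x)$), and bounding $\limsup_n g_n(t_n,x_n)\le\overline{u}^\infty(t_*,x_*)-\phi(t_*)-\tfrac k2 d_\infty(\hat a,x_*)^2$ along any convergent subsequence $(t_n,x_n)\to(t_*,x_*)$ with $x_*\in X_\infty$ (again using the definition of the upper semilimit together with \ref{i:h2}), strictness of the maximum forces $(t_*,x_*)=(\hat t,\hat x)$, hence $(t_n,x_n)\to(\hat t,\hat x)$ in $\mathsf d$, and a squeeze gives $u^n(t_n,x_n)\to\overline{u}^\infty(\hat t,\hat x)$. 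At that point I would invoke Proposition~\ref{t:tsdtest}(ii) for the subsolution $u^n$ on the locally compact geodesic space $(X_n,d_n)$ at the data $(a_n,x_n,k)$ to get $\partial_t\phi(t_n)+H_n(t_n,x_n,u^n(t_n,x_n),k\,d_n(a_n,x_n))\le0$, and finally pass to the limit: $\partial_t\phi(t_n)\to\partial_t\phi(\hat t)$ since $\phi\in C^1$, $k\,d_n(a_n,x_n)\to k\,d_\infty(\hat a,\hat x)$ by \ref{i:h2}, and \ref{i:h3t} carries the Hamiltonians to $H_\infty(\hat t,\hat x,\overline{u}^\infty(\hat t,\hat x),k\,d_\infty(\hat a,\hat x))$, which closes condition (iii).

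The one place where the time-dependent case genuinely differs — and the step I expect to need the most care — is the maximization of $g_n$: unlike the spatial variable, which ranges over the compact set $X_n$, the time variable ranges over the noncompact half-line $(0,\infty)$, so there is no reason for a maximizer to exist globally. The fix is to localize first: choose $r\in(0,\hat t)$ small enough that the strict maximum of the limiting functional is global on $\overline K:=[\hat t-r,\hat t+r]\times\overline{B_{\mathsf d}(\hat x,r)}$ and, using local compactness of $\mathsf X$, small enough that $\overline{B_{\mathsf d}(\hat x,r)}$ is compact; then $\overline K\cap((0,\infty)\times X_n)$ is compact and carries a maximizer $(t_n,x_n)$ of $g_n$. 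One then has to check — which the strictness/convergence argument above provides — that for large $n$ the point $(t_n,x_n)$ lies in the interior of $\overline K$ relative to $(0,\infty)\times X_n$, so that it is a genuine local maximum of $g_n$ on $(0,\infty)\times X_n$ and Proposition~\ref{t:tsdtest}(ii) really applies. Apart from this localization, everything is a routine rerun of the locally compact, time-independent argument, with \ref{i:h1} furnishing the approximating sequences, \ref{i:h2} transferring the distances (hence the slopes $k\,d_n(a_n,x_n)$) to the limit, and \ref{i:h3t} transferring the Hamiltonians.
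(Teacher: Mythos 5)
Your proposal is correct and is exactly the argument the paper intends: the paper omits the proof, stating only that Proposition~\ref{t:tsdtest} ``implies'' the theorem, and your transcription of the proof of Theorem~\ref{Stability} with the time-dependent characterization is the intended route. Your extra care in localizing the time variable to a compact interval before extracting a maximizer is a genuine (if routine) detail that the paper glosses over, and it is handled correctly.
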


\section{Remarks on noncompact case}
\label{s:noncpt}

Return the time-independent problem and study the possibility to remove the (locally) compactness assumption of $(\mathsf{X}, \mathsf{d})$ in Theorem \ref{Stability}.
As in \cite{NN18}, a tool to consider the function on noncompact domain is the Ekeland$'$s variational principle.

\begin{proposition}[Ekeland$'$s variational principle]
\label{t:ekeland}
Let $(X, d)$ be a complete metric space and let $F: X \to \mathbb{R}$ be an upper (resp.\ lower) semicontinuous function bounded from above (resp.\ below).
Then, for each $\hat{x} \in X$, there exists $\bar{x} \in X$ such that $d(\hat{x}, \bar{x}) \le 1$, $F(\bar{x}) \ge F(\hat{x})$ (resp.\ $F(\bar{x}) \le F(\hat{x})$) and $x \to F(x)-m d(\bar{x}, x)$ attains a strict maximum (resp.\ minimum) at $\bar{x}$ with $m := \sup F-F(\hat{x})$ (resp.\ $m := \inf F-F(\hat{x})$).
\end{proposition}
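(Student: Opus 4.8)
The plan is to carry out the classical iterative construction behind Ekeland's variational principle \cite{E79}, adapted to the maximization form stated here. Since the assertion for lower semicontinuous $F$ bounded below follows from the one for upper semicontinuous $F$ bounded above by replacing $F$ with $-F$ (which swaps $\inf$ and $\sup$, strict minimum and strict maximum, and turns $m$ into $-m$), it suffices to treat the latter case. So assume $F$ is upper semicontinuous and bounded above, put $m:=\sup F-F(\hat{x})\ge 0$, and assume moreover $m>0$ --- this is the case relevant to the applications and corresponds to the hypothesis $\varepsilon>0$ in the usual formulation. For $x\in X$ introduce $S(x):=\{\,y\in X: F(y)-m\,d(x,y)\ge F(x)\,\}$. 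Then: (a) $x\in S(x)$; (b) $S(x)$ is closed, since $y\mapsto F(y)-m\,d(x,y)$ is upper semicontinuous; (c) if $y\in S(x)$ then $S(y)\subseteq S(x)$, obtained by telescoping $F(z)\ge F(y)+m\,d(y,z)\ge F(x)+m\,d(x,y)+m\,d(y,z)\ge F(x)+m\,d(x,z)$ via the triangle inequality; moreover $\sup_{S(x)}F$ is finite.

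Next I would build a sequence by setting $x_0:=\hat{x}$ and, inductively, choosing $x_{k+1}\in S(x_k)$ with $F(x_{k+1})\ge\sup_{S(x_k)}F-2^{-k}$ (legitimate since $\sup_{S(x_k)}F$ is a finite number, no smaller than $F(x_k)$). By (c) the sets $S(x_k)$ decrease; by (a) and (b) they are nonempty and closed. For $y\in S(x_{k+1})\subseteq S(x_k)$ one has $m\,d(x_{k+1},y)\le F(y)-F(x_{k+1})\le\sup_{S(x_k)}F-F(x_{k+1})\le 2^{-k}$, whence $\operatorname{diam}S(x_{k+1})\le 2^{1-k}/m\to 0$. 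By completeness of $(X,d)$ and Cantor's intersection theorem, $\bigcap_k S(x_k)=\{\bar{x}\}$ for a unique point $\bar{x}\in X$.

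It then remains to read off the three conclusions. From $\bar{x}\in S(x_0)=S(\hat{x})$ we get $F(\bar{x})\ge F(\hat{x})+m\,d(\hat{x},\bar{x})\ge F(\hat{x})$, and also $m\,d(\hat{x},\bar{x})\le F(\bar{x})-F(\hat{x})\le\sup F-F(\hat{x})=m$, so $d(\hat{x},\bar{x})\le 1$. For the strict maximum of $x\mapsto F(x)-m\,d(\bar{x},x)$ at $\bar{x}$: given $y\ne\bar{x}$, since $\bigcap_k S(x_k)=\{\bar{x}\}$ there is $k$ with $y\notin S(x_k)$, i.e.\ $F(y)-m\,d(x_k,y)<F(x_k)$; combining this with $\bar{x}\in S(x_k)$, i.e.\ $F(x_k)\le F(\bar{x})-m\,d(x_k,\bar{x})$, together with $d(x_k,y)\le d(x_k,\bar{x})+d(\bar{x},y)$, yields $F(y)<F(\bar{x})+m\,d(\bar{x},y)$, that is $F(y)-m\,d(\bar{x},y)<F(\bar{x})$. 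Since the value at $\bar{x}$ is $F(\bar{x})$ and $y\ne\bar{x}$ was arbitrary, the strict maximum follows, completing the proof.

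The one place requiring care is ensuring $\operatorname{diam}S(x_k)\to 0$, which is precisely where $m>0$ and the near-optimal choice of $x_{k+1}$ are both used; apart from that, the only work is the triangle-inequality bookkeeping in the last step. No idea beyond the classical Ekeland argument enters, and one could alternatively simply invoke \cite{E79}; the short self-contained proof above is included because the statement serves as the metric-space substitute for the maximum principle in the noncompact stability theory.
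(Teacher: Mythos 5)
Your proof is correct, but there is no proof in the paper to compare it against: the proposition is quoted from \cite[Corollary 11]{E79} and the paper simply refers the reader there. What you supply is the standard self-contained argument via the nested sets $S(x)=\{y\in X: F(y)-m\,d(x,y)\ge F(x)\}$, the near-optimal recursive choice of $x_{k+1}$, and Cantor's intersection theorem; every step checks out (closedness of $S(x)$ from upper semicontinuity of $y\mapsto F(y)-m\,d(x,y)$, the transitivity $S(y)\subseteq S(x)$ for $y\in S(x)$, the diameter bound $2^{1-k}/m$, and the triangle-inequality bookkeeping giving $F(y)-m\,d(\bar{x},y)<F(\bar{x})$ for $y\ne\bar{x}$), and the reduction of the lower semicontinuous case to the upper semicontinuous one by passing to $-F$ is handled correctly. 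The one point worth making explicit is your restriction to $m>0$: this is not a gap in your argument but a defect of the statement as written, which is false when $m=0$ (take $F$ constant; then $x\mapsto F(x)-0\cdot d(\bar{x},x)$ has no strict maximum at any point). One should either assume $m>0$, or, when $m=0$, weaken ``strict maximum'' to ``maximum'' (for which $\bar{x}=\hat{x}$ works trivially); the latter is all that is actually used in Section \ref{s:noncpt}, where only the non-strict maximality of $x\mapsto\phi_n(x)-m_n\,d_n(x_n,x)$ at $x_n$ enters the stability proof.
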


For details of the Ekeland$'$s variational principle, see \cite[Corollary 11]{E79}.

In noncompact cases, we cannot drop $\psi_2$ but we can reduce $\psi_2$ to the distance function in the test functions.

\begin{proposition}
\label{t:gsdtest}
Let $(X, d)$ be a complete geodesic space not necessarily locally compact.
Then, the following conditions are equivalent.
\begin{itemize}
\item[(i)]
A function $u: X \to \mathbb{R}$ is a metric viscosity subsolution of \eqref{e:fhj}.
\item[(ii)]
For all $\hat{a}, \hat{b}, \hat{x} \in X$, $k, c \ge 0$,
if $u^*(x)-\frac{k}{2}d(\hat{a}, x)^2-c d(\hat{b}, x)$ attains a local maximum at $x = \hat{x}$,
then the inequality
$$
H_c(\hat{x}, u^*(\hat{x}), k d(\hat{a}, \hat{x})) \le 0
$$
holds.
\item[(iii)]
For all $\hat{a}, \hat{b}, \hat{x} \in X$, $k, c \ge 0$,
if $u^*(x)-\frac{k}{2}d(\hat{a}, x)^2-c d(\hat{b}, x)$ attains a strict local maximum at $x = \hat{x}$,
then the inequality
$$
H_c(\hat{x}, u^*(\hat{x}), k d(\hat{a}, \hat{x})) \le 0
$$
holds.
\end{itemize}
Similarly, the following conditions are equivalent.
\begin{itemize}
\item[(i)]
A function $u: X \to \mathbb{R}$ is a metric viscosity supersolution of \eqref{e:fhj}.
\item[(ii)]
For all $\hat{a}, \hat{b}, \hat{x} \in X$, $k, c \ge 0$,
if $u_*(x)+\frac{k}{2}d(\hat{a}, x)^2+c d(\hat{b}, x)$ attains a local minimum at $x = \hat{x}$,
then the inequality
$$
H^c(\hat{x}, u_*(\hat{x}), k d(\hat{a}, \hat{x})) \ge 0
$$
holds.
\item[(iii)]
For all $\hat{a}, \hat{b}, \hat{x} \in X$, $k, c \ge 0$,
if $u_*(x)+\frac{k}{2}d(\hat{a}, x)^2+c d(\hat{b}, x)$ attains a strict local minimum at $x = \hat{x}$,
then the inequality
$$
H^c(\hat{x}, u_*(\hat{x}), k d(\hat{a}, \hat{x})) \ge 0
$$
holds.
\end{itemize}
\end{proposition}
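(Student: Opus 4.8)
We only address the subsolution case; the supersolution case follows by the obvious symmetry (replace $C^{1,-}$ by $C^{1,+}$, local maxima by local minima, and $H_c=\inf$ by $H^c=\sup$). The plan is to run the cycle (i)$\Rightarrow$(ii)$\Rightarrow$(iii)$\Rightarrow$(i), the only substantial step being the last, where Ekeland's variational principle (Proposition \ref{t:ekeland}) replaces the local compactness used in Proposition \ref{t:sdtest}. The implication (i)$\Rightarrow$(ii) is immediate: for $\hat a,\hat b\in X$ and $k,c\ge 0$ the function $\psi=\psi_1+\psi_2$ with $\psi_1=\frac k2 d(\hat a,\cdot)^2$ and $\psi_2=c\,d(\hat b,\cdot)$ is a legitimate test function for subsolutions ($\psi_1\in C^{1,-}$ by the squared distance lemma, $\psi_2$ is $c$-Lipschitz), and $|\nabla_d\psi_1|(\hat x)=k\,d(\hat a,\hat x)$ while $|\nabla_d\psi_2|^*(\hat x)\le c$ since $d(\hat b,\cdot)$ has slope at most $1$; the defining inequality of a subsolution together with the monotonicity of $a\mapsto H_a$ then gives (ii). The implication (ii)$\Rightarrow$(iii) is trivial, a strict local maximum being a local maximum.

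For (iii)$\Rightarrow$(i), fix a test function $\psi=\psi_1+\psi_2$ for subsolutions with $u^*-\psi$ attaining a local maximum at $\hat x$; pick $r>0$ so that the maximum is attained over a closed ball $\overline B(\hat x,r)$, on which $\psi$ is $L$-Lipschitz, and normalize $u^*(\hat x)-\psi(\hat x)=0$. For $\e,\alpha>0$ consider on the complete space $\overline B(\hat x,r)\times\overline B(\hat x,r)$ the upper semicontinuous function
$$
\Phi(x,y)=u^*(x)-\psi(y)-\frac1{2\e}d(x,y)^2-\frac\alpha2 d(\hat x,y)^2 ,
$$
which is bounded above since $u^*(x)\le\psi(x)$ forces $\Phi(x,y)\le L\,d(x,y)-\frac1{2\e}d(x,y)^2\le \e L^2/2$, and which satisfies $\Phi(\hat x,\hat x)=0$. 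In the absence of compactness we cannot maximize $\Phi$ directly, so we apply Ekeland's principle to $\Phi$ (with the product metric) and base point $(\hat x,\hat x)$: this produces $(x_\e,y_\e)\in\overline B(\hat x,r)^2$ with $\Phi(x_\e,y_\e)\ge 0$ such that
$$
(x,y)\mapsto \Phi(x,y)-m_\e\bigl(d(x_\e,x)+d(y_\e,y)\bigr)
$$
attains a strict maximum over $\overline B(\hat x,r)^2$ at $(x_\e,y_\e)$, with $m_\e:=\sup\Phi\le \e L^2/2\to 0$.

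From $\Phi(x_\e,y_\e)\ge 0$ and $u^*(x_\e)\le\psi(x_\e)$ one gets $\frac1{2\e}d(x_\e,y_\e)^2+\frac\alpha2 d(\hat x,y_\e)^2\le L\,d(x_\e,y_\e)$, hence $d(x_\e,y_\e)\le 2\e L$ and $d(\hat x,y_\e)\le 2L\sqrt{\e/\alpha}$; so for fixed $\alpha$ and small $\e$ the point $(x_\e,y_\e)$ lies in the interior of the ball, and $x_\e,y_\e\to\hat x$, $u^*(x_\e)\to u^*(\hat x)$ as $\e\to0$. Freezing $y=y_\e$, the map $x\mapsto u^*(x)-\frac1{2\e}d(y_\e,x)^2-m_\e d(x_\e,x)$ has a strict local maximum at $x_\e$, so (iii) with $\hat a=y_\e$, $k=1/\e$, $\hat b=x_\e$, $c=m_\e$ yields $H_{m_\e}\!\bigl(x_\e,u^*(x_\e),\tfrac1\e d(x_\e,y_\e)\bigr)\le 0$. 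Freezing $x=x_\e$, the map $y\mapsto\Phi(x_\e,y)-m_\e d(y_\e,y)$ has a strict local maximum at $y_\e$; writing its nonconstant part as $(-\psi_1)-\frac1{2\e}d(x_\e,\cdot)^2+\bigl(-\psi_2-\frac\alpha2 d(\hat x,\cdot)^2-m_\e d(y_\e,\cdot)\bigr)$ with $-\psi_1\in C^{1,+}$ and $\frac1{2\e}d(x_\e,\cdot)^2\in C^{1,-}$, the maximum principle (Lemma \ref{t:mp}) gives
$$
\Bigl|\,|\nabla_d\psi_1|(y_\e)-\tfrac1\e d(x_\e,y_\e)\,\Bigr|\le |\nabla_d\psi_2|(y_\e)+\alpha\,d(\hat x,y_\e)+m_\e .
$$
Since enlarging the radius only decreases $H_a$, the two displays combine into $H_{b_\e}\!\bigl(x_\e,u^*(x_\e),|\nabla_d\psi_1|(y_\e)\bigr)\le 0$ with $b_\e=|\nabla_d\psi_2|(y_\e)+\alpha\,d(\hat x,y_\e)+2m_\e$, whence $\limsup_\e b_\e\le|\nabla_d\psi_2|^*(\hat x)$; letting $\e\to0$ (using the continuity of $H$, the monotonicity of $a\mapsto H_a$, and the continuity of the slope of the $C^{1,-}$ function $\psi_1$) gives $H_{|\nabla_d\psi_2|^*(\hat x)}\!\bigl(\hat x,u^*(\hat x),|\nabla_d\psi_1|(\hat x)\bigr)\le 0$, the defining inequality of a subsolution; as $\psi$ was arbitrary, (i) holds.

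The single real obstacle, and the only place where the argument genuinely departs from that of Proposition \ref{t:sdtest}, is the extraction of the near-maximizer $(x_\e,y_\e)$: without local compactness the supremum of $\Phi$ need not be attained, and Ekeland's principle returns instead a strict maximizer of $\Phi$ perturbed by a linear term $-m_\e(d(x_\e,\cdot)+d(y_\e,\cdot))$. This is exactly why conditions (ii) and (iii) must carry the extra summand $-c\,d(\hat b,\cdot)$: its $x$-slice is precisely the term $-m_\e d(x_\e,x)$, which is absorbed by taking $c=m_\e$ in (iii), and since $m_\e\to0$ it leaves no trace in the limit, so the conclusion is the same as in the locally compact case.
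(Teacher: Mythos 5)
Your proof is correct and follows essentially the same route as the paper's: doubling of variables with the penalizations $\frac{1}{2\e}d(x,y)^2+\frac{\alpha}{2}d(\hat{x},y)^2$, Ekeland's variational principle to produce an approximate maximizer with the linear correction $m_\e\bigl(d(x_\e,\cdot)+d(y_\e,\cdot)\bigr)$ that is absorbed by the extra term $c\,d(\hat{b},\cdot)$ in conditions (ii)--(iii), followed by the maximum principle on the $y$-slice and the limit passage. The only differences are cosmetic: you write out the easy implications (i)$\Rightarrow$(ii)$\Rightarrow$(iii) and apply (iii) at the Ekeland point (which is indeed a strict maximum), whereas the paper records only the hard implication and invokes (ii) there; both are valid.
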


\begin{proof}
We only show the assertion for subsolutions and show the condition (ii) implies the condition (i).
Fix a test function $\psi = \psi_1+\psi_2$ for subsolutions such that $u^*-\psi$ attains a local maximum at $\hat{x} \in X$.
Consider
$$
F(x, y) = u^*(x)-\psi(y)-\frac{1}{2\e}d(x, y)^2-\frac{\alpha}{2}d(\hat{x}, y)^2
$$
with positive $\e$, $\alpha$.
We then see that
$$
\begin{aligned}
F(x, y)
&\le (u^*-\psi)(\hat{x})+\psi(x)-\psi(y)-\frac{1}{2\e}d(x, y)^2 \\
&\le (u^*-\psi)(\hat{x})+L d(x, y)-\frac{1}{2\e}d(x, y)^2 \\
&\le (u^*-\psi)(\hat{x})+\frac{1}{2}\e L^2
< +\infty
\end{aligned}
$$
for the (local) Lipschitz constant $L$ of $\psi$.
Hence, in view of the Ekeland$'$s variational principle, there exists points $x_\e, y_\e \in X$ such that $d(\hat{x}, x_\e), d(\hat{y}, y_\e) \le 1$ and
$$
u^*(x)-\psi(y)-\frac{1}{2\e}d(x, y)^2-\frac{\alpha}{2}d(\hat{x}, y)^2-m_\e d(x_\e, x)-m_\e d(y_\e, y)
$$
attains a strict local maximum at $(x, y) = (x_\e, y_\e)$.
Here,
$$
m_\e = \sup F-F(\hat{x}, \hat{x}) \le \frac{1}{2}\e L^2
$$
and so $m_\e \to 0$ as $\e \to 0$.
Note that
$$
\begin{aligned}
&u^*(x_\e)-\psi(y_\e)-\frac{1}{2\e}d(x_\e, y_\e)^2-\frac{\alpha}{2}d(\hat{x}, y_\e)^2 \\
&\quad \ge u^*(\hat{x})-\psi(\hat{x})-m_\e d(x_\e, x)-m_\e d(y_\e, y) \\
&\quad \ge u^*(x_\e)-\psi(x_\e)-2 m_\e
\end{aligned}
$$
and hence
$$
\frac{1}{2\e}d(x_\e, y_\e)^2+\frac{\alpha}{2}d(\hat{x}, y_\e)^2 \le L d(x_\e, y_\e)+\e L^2
$$
Therefore, we can see that $(x_\e, y_\e) \to (\hat{x}, \hat{x})$ as $\e \to 0$.
Now, by the assumption (ii), we have
$$
H_{m_\e}(x_\e, u^*(x_\e), \frac{1}{\e}d(x_\e, y_\e)) \le 0.
$$
By Lemma \ref{t:mp} we obtain
$$
||\nabla_d\psi_1|(y_\e)-\frac{1}{\e}d(x_\e, y_\e)| \le |\nabla_d\psi_2|(t_\e, y_\e)+\alpha d(\hat{x}, y_\e)+m_\e.
$$
Combining them,
$$
H_{|\nabla_d\psi_2|(y_\e)+\alpha d(\hat{x}, y_\e)+2 m_\e}(x_\e, u^*(x_\e), |\nabla_d\psi_1|(y_\e)) \le 0
$$
and therefore
$$
H_{|\nabla_d\psi_2|^*(\hat{x})}(\hat{x}, u^*(\hat{t}), |\nabla_d\psi_1|(\hat{x})) \le 0.
$$
We have shown that $u$ is a metric viscosity subsolution.
\end{proof}

Let $B(a, r; X)$ denote the closed ball in the metric space $(X, d)$, say,
$$
B(a, r; X) = \{ x \in X \mid d(a, x) \le r \}.
$$
We now can prove the following theorem:

\begin{theorem}[Stability for noncompact case]
Let $(\mathsf{X},\mathsf{d})$ be proper metric space.
Assume \ref{i:h1}, \ref{i:h2}, and \ref{i:h3}.
\begin{itemize}
\item
Assume
\begin{enumerate}
\renewcommand{\labelenumi}{($\text{H4}^+$)}
\renewcommand{\theenumi}{($\text{H4}^+$)}
\item
\label{i:h4p}
For each sequence $(a_n) \in S(a_\infty)$ and upper semicontinuous functions $\phi_n$ on $X_n$ and $\phi_\infty$ on $X_\infty$ such that $\limsup_n \phi_n(x_n) \le \phi_\infty(x_\infty)$ for all $(x_n) \in S(x_\infty)$,
there exists $\delta > 0$ such that the inequality
$$
\limsup_n \sup_{B(a_n, r; X_n)}\phi_n
\le \sup_{B(a_\infty, r; X_\infty)}\phi_\infty
$$
holds for all positive $r < \delta$.
\end{enumerate}
Let $u^n$ be a metric viscosity subsolution of \eqref{e:hj} for $n \in \mathbb{N}$.
If the upper semilimit
$$
\overline{u}^\infty(x)
:= \limsup_{n \to \infty, x_n \in X_n \to x \in X_\infty}u^n(x_n)
= \sup_{(x_n) \in S(x)}\limsup_{n \to \infty}u^n(x_n)
$$
is upper semicontinuous in $d_\infty$,
then $\overline{u}^\infty$ is a metric viscosity subsolution of \eqref{e:lhj}.
\item
Similarly, assume
\begin{enumerate}
\renewcommand{\labelenumi}{($\text{H4}^-$)}
\renewcommand{\theenumi}{($\text{H4}^-$)}
\item
\label{i:h4n}
For each sequence $(a_n) \in S(a_\infty)$ and upper semicontinuous functions $\phi_n$ on $X_n$ and $\phi_\infty$ on $X_\infty$ such that $\liminf_n \phi_n(x_n) \ge \phi_\infty(x_\infty)$ for all $(x_n) \in S(x_\infty)$,
there exists $\delta > 0$ such that the inequality
$$
\liminf_n \inf_{B(a_n, r; X_n)}\phi_n
\ge \inf_{B(a_\infty, r; X_\infty)}\phi_\infty
$$
holds for all positive $r < \delta$.
\end{enumerate}
Let $u^n$ be a metric viscosity supersolution of \eqref{e:hj} for $n \in \mathbb{N}$.
If the lower semilimit
$$
\underline{u}^\infty(x)
:= \liminf_{n \to \infty, x_n \in X_n \to x \in X_\infty}u^n(x_n)
= \inf_{(x_n) \in S(x)}\liminf_{n \to \infty}u^n(x_n)
$$
is lower semicontinuous in $d_\infty$,
then $\underline{u}^\infty$ is a metric viscosity supersolution of \eqref{e:lhj}.
\end{itemize}
\end{theorem}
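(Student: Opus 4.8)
The plan is to verify condition (ii) of Proposition~\ref{t:gsdtest} for $\overline{u}^\infty$ on the complete geodesic space $(X_\infty, d_\infty)$, which is legitimate because that proposition requires no local compactness. Accordingly, fix $\hat a, \hat b, \hat x \in X_\infty$ and $k, c \ge 0$ and assume that $\phi(x) := \overline{u}^\infty(x) - \frac{k}{2} d_\infty(\hat a, x)^2 - c\, d_\infty(\hat b, x)$ attains a local maximum at $\hat x$, say over the closed ball $B(\hat x, R; X_\infty)$; we may assume $\overline{u}^\infty(\hat x) \in \mathbb{R}$, as otherwise there is nothing to check. The goal is then the single inequality $H_{\infty, c}(\hat x, \overline{u}^\infty(\hat x), k\, d_\infty(\hat a, \hat x)) \le 0$, after which Proposition~\ref{t:gsdtest} finishes. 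Note $\phi$ is upper semicontinuous since $\overline{u}^\infty$ is, and that replacing each $u^n$ by its upper semicontinuous envelope $(u^n)^*$ changes neither the semilimit $\overline{u}^\infty$ nor the subsolution property, so we may assume each $u^n$ is upper semicontinuous.

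Next I would pass to the variable spaces. Using~\ref{i:h1} pick $a_n, b_n \in X_n$ with $a_n \to \hat a$, $b_n \to \hat b$ in $\mathsf d$ and, by the definition of the upper semilimit together with~\ref{i:h1} and after passing to a subsequence (permissible, since the target inequality involves only the fixed data), a sequence $(\tilde x_n) \in S(\hat x)$ with $u^n(\tilde x_n) \to \overline{u}^\infty(\hat x)$. Set $\phi_n(x) := u^n(x) - \frac{k}{2} d_n(a_n, x)^2 - c\, d_n(b_n, x)$ on $X_n$, an upper semicontinuous function. Condition~\ref{i:h2} gives, for every $x \in X_\infty$ and $(x_n) \in S(x)$, $\limsup_n \phi_n(x_n) = \limsup_n u^n(x_n) - \frac{k}{2} d_\infty(\hat a, x)^2 - c\, d_\infty(\hat b, x)$, hence $\sup_{(x_n) \in S(x)} \limsup_n \phi_n(x_n) = \phi(x)$. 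Therefore~\ref{i:h4p} applies with centers $\tilde x_n \to \hat x$: there is $\delta > 0$ with $\limsup_n \sup_{B(\tilde x_n, r; X_n)} \phi_n \le \sup_{B(\hat x, r; X_\infty)} \phi = \phi(\hat x)$ for all $0 < r < \min\{\delta, R\}$. Since moreover $\sup_{B(\tilde x_n, r; X_n)} \phi_n \ge \phi_n(\tilde x_n) \to \phi(\hat x)$ (again by~\ref{i:h2}), the defect $\varepsilon_n := \sup_{B(\tilde x_n, r; X_n)} \phi_n - \phi_n(\tilde x_n)$ tends to $0$; fix one such $r$. In particular $\phi_n$ is bounded above on $B(\tilde x_n, r; X_n)$ for large $n$.

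The third step is where Ekeland's principle replaces compactness. The ball $\bar B_n := B(\tilde x_n, r; X_n)$ is a complete metric space, $\phi_n|_{\bar B_n}$ is upper semicontinuous and (for large $n$) bounded above, and $\tilde x_n$ is an $\varepsilon_n$-approximate maximizer of it. Applying Proposition~\ref{t:ekeland} on $\bar B_n$ — after rescaling the metric $d_n$ on $\bar B_n$ so that the resulting perturbation coefficient is, say, $\sqrt{\varepsilon_n}$, with the obvious adjustment when $\varepsilon_n = 0$ — yields a point $\bar x_n \in \bar B_n$ with $d_n(\tilde x_n, \bar x_n) \to 0$, with $\phi_n(\bar x_n) \ge \phi_n(\tilde x_n)$, and such that $x \mapsto \phi_n(x) - \lambda_n d_n(\bar x_n, x)$ with $\lambda_n \to 0^+$ attains a strict maximum over $\bar B_n$ at $\bar x_n$. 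For large $n$ the point $\bar x_n$ is interior to $\bar B_n$, so this is a local maximum over all of $X_n$. Writing the maximized function as $u^n - \psi_1 - \psi_2$ with $\psi_1 := \frac{k}{2} d_n(a_n, \cdot)^2$ and $\psi_2 := c\, d_n(b_n, \cdot) + \lambda_n d_n(\bar x_n, \cdot)$, the lemma on squared distance functions shows $\psi_1 \in C^{1,-}$ with $|\nabla_{d_n}\psi_1|(\bar x_n) = k\, d_n(a_n, \bar x_n)$, while $\psi_2$ is locally Lipschitz with $|\nabla_{d_n}\psi_2| \le c + \lambda_n$ everywhere (the slope of a sum is at most the sum of slopes and a distance function is $1$-Lipschitz), so $|\nabla_{d_n}\psi_2|^*(\bar x_n) \le c + \lambda_n$. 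Thus $\psi_1 + \psi_2$ is an admissible test function for subsolutions, and the definition of metric viscosity subsolution for $u^n$ together with the monotonicity of $a \mapsto H_{n, a}$ gives $H_{n, c + \lambda_n}(\bar x_n, u^n(\bar x_n), k\, d_n(a_n, \bar x_n)) \le 0$ for all large $n$.

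Finally I would pass to the limit. Since $\mathsf d \le d_n$ one has $\bar x_n \to \hat x$ in $\mathsf d$, and $\phi_n(\bar x_n)$, squeezed between $\phi_n(\tilde x_n)$ and $\phi_n(\tilde x_n) + \varepsilon_n$, tends to $\phi(\hat x)$; together with $d_n(a_n, \bar x_n) \to d_\infty(\hat a, \hat x)$ and $d_n(b_n, \bar x_n) \to d_\infty(\hat b, \hat x)$ from~\ref{i:h2} this forces $u^n(\bar x_n) \to \overline{u}^\infty(\hat x)$. A short argument using only~\ref{i:h3} shows $\liminf_n H_{n, a_n}(x_n, u_n, p_n) \ge H_{\infty, a_\infty}(x_\infty, u_\infty, p_\infty)$ whenever $x_n \to x_\infty$ in $\mathsf d$, $u_n \to u_\infty$, $p_n \to p_\infty$, $a_n \to a_\infty$ (choose $p'_n$ nearly realizing the infimal convolution defining $H_{n,a_n}$, extract a convergent subsequence of the bounded sequence $(p'_n)$, and invoke~\ref{i:h3}). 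Applying this with $a_n = c + \lambda_n \to c$ yields $H_{\infty, c}(\hat x, \overline{u}^\infty(\hat x), k\, d_\infty(\hat a, \hat x)) \le 0$, completing the verification of condition (ii) of Proposition~\ref{t:gsdtest}. The supersolution statement is obtained in the same way, with~\ref{i:h4n}, lower semicontinuous envelopes, the $\liminf$- and $\inf$-over-balls versions, the lower semicontinuous form of Proposition~\ref{t:ekeland}, and the mirror inequality $\limsup_n H_n^{a_n}(x_n, u_n, p_n) \le H_\infty^{a_\infty}(x_\infty, u_\infty, p_\infty)$. The main obstacle is precisely the one addressed in the third paragraph: without local compactness one cannot simply extract a point of $X_n$ at which the pulled-back test function $\phi_n$ is maximized, and the role of~\ref{i:h4p} is exactly to force the maximization defect over small balls to vanish, so that Ekeland's principle supplies such a point with a perturbation that remains an admissible test-function summand of vanishing local slope.
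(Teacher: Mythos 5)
Your proof is correct and follows the same overall strategy as the paper's: reduce the claim to the test-function characterization of Proposition \ref{t:gsdtest}, pull the test function back to $X_n$ as $\phi_n(x)=u^n(x)-\frac{k}{2}d_n(a_n,x)^2-c\,d_n(b_n,x)$, use Ekeland's variational principle together with \ref{i:h4p} to produce near-maximizers, and pass to the limit via \ref{i:h2} and \ref{i:h3}. Two refinements in your write-up are worth noting. First, you apply Ekeland's principle in its strong form (rescaling the metric so that the perturbation coefficient becomes $\sqrt{\varepsilon_n}$), which yields $d_n(\tilde{x}_n,\bar{x}_n)\to 0$ directly and hence $\bar{x}_n\to\hat{x}$ in $\mathsf{d}$ without any appeal to strictness of the maximum; the paper instead uses the weak form and infers $x_n\to\hat{x}$ from the strictness of the limiting maximum, an inference that is genuinely delicate in a noncompact space (approximate maximizers of a strict maximum need not converge to it when the supremum over an annulus is not attained), so your variant actually shores up a soft spot and lets you work with condition (ii) of Proposition \ref{t:gsdtest} rather than (iii). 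Second, you make explicit the lower semicontinuity $\liminf_n (H_n)_{c+\lambda_n}(x_n,u_n,p_n)\ge (H_\infty)_c(\hat{x},u_\infty,p_\infty)$ needed to pass the inf-convolved Hamiltonian to the limit under \ref{i:h3}, and you track the constraint $r<\delta$ in \ref{i:h4p}; both points are left implicit in the paper. Otherwise the decomposition of the test function into $\psi_1=\frac{k}{2}d_n(a_n,\cdot)^2$ and $\psi_2=c\,d_n(b_n,\cdot)+\lambda_n d_n(\bar{x}_n,\cdot)$ and the concluding limit passage coincide with the paper's argument.
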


\begin{proof}
We only show the theorem for the subsolution since the proof for the supersolution is similar.
Fix $\hat{a}, \hat{b}, \hat{x} \in X_\infty$, $k, c \ge 0$
and assume that the function
$$
\phi_\infty(x) = \overline{u}^\infty(x)-\frac{k}{2}d_\infty(\hat{a}, x)^2-c d_\infty(\hat{b}, x) \quad \text{for $x \in X_\infty$}
$$
attains a strict local maximum at $x = \hat{x}$.
Let $R > 0$ be a radius such that
$$
\sup_{x \in B(\hat{x}, R; X_\infty)}\phi_\infty(x)
= \phi_\infty(\hat{x})
$$
By the assumption \ref{i:h1}, there exists sequences $a_n, b_n, \tilde{x}_n \in X_n$ satisfying $a_n \to \hat{a}$, $b_n \to \hat{b}$, $\tilde{x}_n \to \hat{x}$ in $\mathsf{d}$ and $u^n(\tilde{x}_n) \to \overline{u}^\infty(\hat{x})$.
For $n \in \mathbb{N}$ applying the Ekeland$'$s variational principle to the function
$$
\phi_n(x) = u_n(x)-\frac{k}{2}d_n(a_n, x)^2-c d_n(b_n, x) \quad \text{for $x \in X_n$,}
$$
we see that there is a maximum point $x = x_n$ of
$$
x \mapsto \phi_n(x)-m_n d_n(x_n, x) = u_n(x)-\frac{k}{2}d_n(a_n, x)^2-c d_n(b_n, x)-m_n d_n(x_n, x),
$$
where
$$
m_n = \sup_{x \in B(\tilde{x}_n, R; X_n)}\phi_n(x)-\phi_n(\tilde{x}_n).
$$
Assumptions \ref{i:h4p} and \ref{i:h2} implies
$$
\limsup_n m_n
\le \sup_{x \in B(\hat{x}, R; X_\infty)}\phi_\infty(x)-\phi_\infty(\hat{x})
\le 0
$$
and therefore $m_n \to 0$.
Note that
$$
\phi_n(x_n) \ge \phi_n(\tilde{x}_n)-2 R m_n
$$
and the right-hand side converges to $\phi_\infty(\hat{x})$.
Therefore, since $\hat{x}$ is a strict maximum point,
we see that $x_n \to \hat{x}$ in $\mathsf{d}$ and $u^n(x_n) \to \overline{u}^\infty(\hat{x})$.
Now, take the limit of
$$
(H_n)_{c+m_n}(x_n, u^n(x_n), k d_n(a_n, x_n)) \le 0.
$$
In view of \ref{i:h2} and \ref{i:h3}, we have
$$
(H_\infty)_c(\hat{x}, \overline{u}^\infty(\hat{x}), k d_\infty(\hat{a}, \hat{x})) \le 0,
$$
which completes the proof.
\end{proof}

\begin{remark}
\label{t:noth1}
In noncompact case the original definition of Hausdorff convergence \ref{i:h1} may be too strong to handle various problems.
For example, consider the half-lines
$$
X_n = \{ (x, y) \in \mathbb{R}^2 \mid x \ge 0, y = n^{-1}x \},
\quad X_\infty = \{ (x, y) \in \mathbb{R}^2 \mid x \ge 0, y = 0 \}
$$
in the plane $\mathbb{R}^2$.
At the point $P = (x, n^{-1}x) \in X_n$ the distance from $X_\infty$ is given by
$$
d(X_\infty, P) = n^{-1}x
$$
so that the Hausdorff distance $d_H(X_\infty, X_n)$ is always infinity.
In particular, $X_n$ does not converges to $X_\infty$ in the sense of \ref{i:h1}.
Meanwhile, assumption \ref{i:h2} is fulfilled.
There are alternative definitions of Hausdorff convergence to relax the concept for noncompact cases \cite{P05}.
However, we do not touch this subject any more since it is not essential in the present study.
\end{remark}

\section*{Acknowledgments}

The authors would like to thank Qing~Liu for fruitful discussions and insightful suggestions.
The work of the first author was supported by JST SPRING, Grant Number JPMJSP2119.
The work of the second author was supported by JSPS KAKENHI Grant Number 19K14566 and Ginpu Fund Research Grant of Kyoto University.
This work was done while the first author visited Kyoto University
and he is grateful for the kind hospitality.

% \bibliographystyle{amsplain}
% \bibliography{references}

\providecommand{\bysame}{\leavevmode\hbox to3em{\hrulefill}\thinspace}
\providecommand{\MR}{\relax\ifhmode\unskip\space\fi MR }
% \MRhref is called by the amsart/book/proc definition of \MR.
\providecommand{\MRhref}[2]{%
  \href{http://www.ams.org/mathscinet-getitem?mr=#1}{#2}
}
\providecommand{\href}[2]{#2}

\end{document}